\newtheorem{theorem}{Theorem}[section]}
\newtheorem{lemma}[theorem]{Lemma}}
\newtheorem{proposition}[theorem]{Proposition}}
\newtheorem{corollary}[theorem]{Corollary}}
\newtheorem{definition}[theorem]{Definition}}
\newenvironment{proof}{\noindent\textbf{Proof:\ }}{$\hfill{\bullet}$}
\numberwithin{equation}{section}
\title{\textsc{Weak Formulation of the Laplacian on the Full Shift Space}}                
\author{Shrihari Sridharan \\ {\tt shrihari@iisertvm.ac.in}  \bigskip \\ Sharvari Neetin Tikekar \\ {\tt sharvai.tikekar14@iisertvm.ac.in}
 \bigskip \\ 
 Indian Institute of Science Education and Research \\ Thiruvananthapuram(IISER-TVM), India.}     
\date{}
\begin{document}

\maketitle
\thispagestyle{empty}

\begin{abstract}
\noindent
We consider a Laplacian on the one-sided full shift space over a finite symbol set, which is constructed as a renormalized limit of finite difference operators. We propose a weak definition of this Laplacian, analogous to the one in calculus, by choosing test functions as those which have finite energy and vanish on various boundary sets. In the abstract setting of the shift space, the boundary sets are chosen to be the sets on which the finite difference operators are defined. We then define the Neumann derivative of functions on these boundary sets and establish a relation between three important concepts in analysis so far, namely, the Laplacian, the bilinear energy form and the Neumann derivative of a function. As a result, we obtain the Gauss-Green's formula analogous to the one in classical case. We conclude this paper by providing a sufficient condition for the Neumann boundary value problem on the shift space.   
\end{abstract} 
\bigskip \bigskip

\begin{tabular}{l c l}
\textbf{Keywords} & : & One-sided full shift space, \\
& &Weak formulation of the Laplacian,  \\ & & Energy form, Neumann derivative. \\
\textbf{AMS Subject Classifications} & : & 28A15, 37B10, 31E05.  
\end{tabular}
\bigskip 
\bigskip

\maketitle

\bigskip

\section{Introduction} 
One of the main aspects of analysis is the theory of calculus. Until recently, calculus intrinsically necessitated the underlying space to be smooth. During $1970$'s, it was observed that nature is abound with non-smooth objects like fractals, the term which was coined by Mandelbrot \cite{mandelbrot_book,mandelbrot}. The need for studying various physical phenomena like heat and wave propagation on such sets gave birth to the theory of calculus on these rough surfaces, popularly known as \textit{rough analysis}. There are two main approaches towards constructing a Laplacian on fractals like the Sierpi\'nski gasket. The probabilistic approach \cite{goldstein, kusuoka, barperk} derives a Laplacian as a generator of a diffusion process, whereas, the direct approach \cite{kigami89, kigamipcf} proposed by Kigami, constructs a Laplacian on a class of self-similar fractals that includes the Sierpi\'nski gasket, as a limit of renormalized difference operators. A great deal of literature is built around the study of analysis on different types of fractal sets over the last few decades, \cite{alonsoteplyaev, alonsostrichartzetal, derfeletal, fuku_shima, listrichartz, strichartz, zhou}.
\medskip

\noindent
Generalizing these concepts to an abstract non-fractal setting of the shift space, Denker \textit{et al.} \cite{denker} developed Dirichlet forms on the quotient spaces of the shift space and obtained a corresponding Laplacian. (See \cite{fukushima} for general theory on Dirichlet forms and Laplacian on Hilbert spaces). A shift space is a symbolic description of self-similar fractal sets like Cantor set or Sierpi\'nski gasket. For a finite symbol set $S := \{1,\,2,\,\cdots,\,N \}$ with $N \ge 2$, a one sided full shift space ($\Sigma_{N}^{+}, \sigma)$ is the space of sequences over symbol set $S$ given by,
\[ \Sigma_{N}^{+} := \left\lbrace x = (x_{1}\, x_{2}\, \cdots) : x_{i} \in S, \ \forall i \ge 1\right\rbrace,\]
along with the shift map $\sigma : \Sigma_{N}^{+} \longrightarrow \Sigma_{N}^{+}$, defined as, $\sigma(\,(x_{1}\,x_{2}\,\cdots)\,) = (x_{2}\,x_{3}\,\cdots )$. The discrete topology on $S$ induces a product topology on $\Sigma_{N}^{+}$, under which it is a compact, totally disconnected and perfect metrizable space. Such a distinctive topology makes it interesting to study the analysis on the shift space. Detailed description of the shift space, its dynamical properties and its numerous applications in different fields can be found in \cite{bks, bmn, shannon}. In \cite{arxiv}, we followed Kigami's approach to construct a Laplacian $\Delta$ on the full shift space as a renormalized limit of difference operators $H_{m}$ on certain finite subsets $V_{m}$ of $\Sigma_{N}^{+}$. In sections \eqref{laplacian} and \eqref{energy section}, we summarise these concepts derived in \cite{arxiv}. In section \eqref{energy section}, we further develop some important properties of the finite Dirichlet forms which are induced by the difference operators.
\medskip

\noindent       
The main element involved in the analytical construction of the Laplacian on self-similar sets is the \emph{energy} $\mathcal{E}$ (resistance or Dirichlet form). It is a symmetric bilinear non-negative definite form obtained as the limit of the finite Dirichlet forms. Energy gives rise to an intrinsic effective resistance metric on the underlying fractal set. This metric determines the topology to develop such theory of analysis on a fractal set, independent of its Euclidean embedding. It so happens that, in case of post-critically finite (p.c.f.) self-similar sets, the effective resistance and the Euclidean metric are compatible. Interested readers may refer to \cite{kigamimetric, kigamifrac} for a detailed study of the topic. However, we proved in \cite{arxiv}, that in case of the shift space, the resistance metric does not yield the complete framework of $\Sigma_{N}^{+}$ to develop the analysis. Therefore all the analysis to be carried out further is in the framework of the standard topology on $\Sigma_{N}^{+}$, independent of the effective resistance metric.
\medskip

\noindent
As the difference operators $H_{m}$ induce finite Dirichlet forms $\mathcal{E}_{H_{m}}$ on the finite sets $V_{m}$, it is then natural to explore the relation between the Laplacian $\Delta$ and the energy form $\mathcal{E}$. In case of the p.c.f. self-similar sets, if $u$ and $f$ are continuous functions with $u$ having finite energy, then $\Delta u = f$ if $\mathcal{E}(u,v) = - \int f \, v \, \mathrm{d} \mu$ for all continuous functions $v$ having finite energy and vanishing on the boundary and $\mu$ being an appropriate self-similar probability measure. Such a formulation of the Laplacian is termed as \emph{weak formulation}, due to its clear resemblance with the weak definition of the classical Laplacian. In this paper, we attempt to address this problem in the abstract setting of the shift space. Towards that end, in section \eqref{weak formulation section}, we first conceptualize the idea of a boundary in a totally disconnected space $\Sigma_{N}^{+}$ and propose an analogous weak formulation of the Laplacian taking the various boundary sets into consideration. We restrict the study of the Laplacian to a smaller domain $\mathcal{D}$, than the one considered in \cite{arxiv}. This new domain of the Laplacian is split into subdomains corresponding to various boundary sets. We prove that on each of these subdomains, the weak formulation of the Laplacian agrees with its strong definition. Furthermore, we provide a complete characterization of the harmonic functions in section \eqref{weak formulation section}. 
\medskip

\noindent
The last two sections in this paper focus on solving an analogous Neumann boundary value problem for the Laplacian on $\Sigma_{N}^{+}$. In section \eqref{neumann derivative}, we give a definition of the Neumann derivative of the functions at the boundary points. We establish that the Neumann derivative exists for all the functions in the domain of the Laplacian, $\mathcal{D}$. We further obtain the Gauss-Green's formula relating the Laplacian and the Neumann derivative of a function in $\mathcal{D}$. For a given function $f \in \mathcal{C}(\Sigma_{N}^{+}),$ we provide a sufficient condition for the existence of a solution to the equation
\[ \Delta u \ = \ f, \] 
under Neumann boundary conditions, in section \eqref{boundary problem}.

\section{Laplacian on $\Sigma_{N}^{+}$}
\label{laplacian}
Let us begin by summarizing the basic concepts of the Laplacian in the setting of the symbolic space $\Sigma_{N}^{+}$ developed in \cite{arxiv}. Recall the definition of the one-sided full shift space $(\Sigma_{N}^{+}, \sigma)$ from the previous section. The distance $d$ between any two points $x,y$ in the shift space depends only on the first position where the two sequences disagree, as given below.
\[ d(x, y)\  := \ \frac{1}{2^{\, \rho(x, y)}},\ \ \ \text{where}\ \ \rho(x, y)\ :=\ \min \{ i : x_{i} \ne y_{i} \}\ \ \text{with}\ \ \rho(x, x)\ :=\ \infty. \] 
The cylinder sets of any length $m \ge 1$ in $\Sigma_{N}^{+}$ are defined by 
\[ [p_{1}\, \cdots\, p_{m}]\  := \ \left\lbrace x \in \Sigma_{N}^{+} : x_{1} = p_{1}\, ,\, \cdots\, ,\, x_{m} = p_{m} \right\rbrace, \] 
where the initial $m$ co-ordinates are fixed. The equidistributed Bernoulli measure $\mu$ on $\Sigma_{N}^{+}$ is given by, $ \mu ([p_{1}\, \cdots\, p_{m}]) = \frac{1}{N^{m}}$. The inverse of the shift map $\sigma$ has $N$ branches given by $\sigma_{l} : \Sigma_{N}^{+} \longrightarrow [l]$, for each $l \in S$. These inverse branches give rise to a self-similar structure on the shift space. Consider the set of fixed points of $\sigma$, namely $V_{0} = \left\lbrace \dot{(1)}\, ,\, \dot{(2)}\, ,\, \cdots\, ,\, \dot{(N)}\, \right\rbrace $, where for any $l \in S$, by $(\dot{l})$ we mean the constant sequence $ (l\, l\, \cdots\, ) \in \Sigma_{N}^{+}$. Further, for each $m \ge 1$, the $ m$-th order pre-image of $V_{0}$ is given inductively by $V_{m} := \bigcup\limits_{l\, \in\, S} \sigma_{l}\, (V_{m - 1})$. Any point $p$ in $V_{m}$ is of the form $p = (p_{1}\, \cdots\, p_{m}\, p_{m + 1}\, p_{m+1}\, \cdots\, )$ and for simplicity, it is denoted by $(p_{1}\,  \cdots\, p_{m}\, \dot{p}_{m + 1} )$. In particular, for a point $p \in V_{m} \setminus V_{m - 1}$, we have  $p_{m} \ne p_{m + 1}$. $\{ V_{m} \}_{m \ge 0}$ forms an increasing sequence of subsets of $\Sigma_{N}^{+}$. The set $V_{*} := \bigcup\limits_{m\, \ge\, 0} V_{m}$ is dense in the space $\Sigma_{N}^{+}$, {\it i.e.,} for any $x = (x_{1}\, x_{2}\, \cdots\, ) \in \Sigma_{N}^{+}$, the sequence of points $(\dot{x}_{1}) \in V_{0};\ (x_{1}\, \dot{x}_{2}) \in V_{1};\ \cdots;\ (x_{1}\, x_{2}\, \cdots\, x_{m}\, \dot{x}_{m + 1}\, ) \in V_{m}  $ and so on, converges to $x$. The set $V_{m}$ is a finite set of cardinality $N^{m+1}$. On each $V_{m}$, an equivalence relation $\sim_{m}$ is defined as follows. The points $p,\, q \in V_{m}$ are $m $-related, denoted by $p\sim_{m}q$, if and only if $p_{i} = q_{i}$ for all $1 \le i \le m$. $q$ is also called as $m$-neighbour of $p$ in $V_{m}$. Any two points in $V_{0}$ are $0$-related. The $m$-equivalence class of $p \in V_{m}$ is denoted by $[p_{1}\,p_{2}\, \cdots\, p_{m} ]|_{V_{m}} $. For any point $p \in V_{m}$, its deleted neighbourhood in $V_{m}$ is the set $\mathcal{U}_{p,\, m} = \left\lbrace q \in V_{m} \ | \ q \sim_{m} p, \ q \ne p  \right\rbrace  $ consisting of its $N-1$ points, which are its $m$-neighbours in $V_{m}$.
\medskip

\noindent
For each $m \ge 0$, let $\ell(V_{m}) : = \left\lbrace u \ | \ u : V_{m} \longrightarrow \mathbb{R}  \right\rbrace$, be the set of all real valued functions on $V_{m}$. The standard inner product on $\ell(V_{m})$, denoted by $\left\langle \cdot , \cdot \right\rangle$, is defined as, $\left\langle u,v \right\rangle = \sum\limits_{p\, \in \,V_{m}} u(p)\, v(p)$, for $u,v \in \ell(V_{m})$. The difference operator on $\ell(V_{m})$ is a linear operator $H_{m} : \ell(V_{m}) \longrightarrow  \ell(V_{m})$ defined inductively as follows. First, for $u \in \ell(V_{0})$ and $(\dot{l}) \in V_{0}$,
\[ H_{0} (u) (\dot{l})\ := \ -\, (N - 1)\, u\, (\dot{l}) + \sum\limits_{\substack{k\, \in\, S \\ k\, \ne\, l}} u\, (\dot{k}). \]
Now, let $m \ge 1$, $u \in \ell(V_{m})$ and $p \in V_{m}$. Define $ \kappa_{p} := \min \, \{ j\, : \, p \in V_{j} \}$ to be the index of the smallest set from collection $\{ V_{m} \}_{m \ge 0}$, which contains the point $p$. Note that if $\kappa_{p} \ge 1$ then $p \in V_{\kappa_{p}} \setminus V_{\kappa_{p}-1} $. Then define $H_{m}$ as
\begin{eqnarray*}
H_{m} (u) (p) \ & := \  
\begin{cases}
- (N - 1)\, u(p)\, +\, \sum\limits_{q \, \in \,\mathcal{U}_{p, m}} u(q)\ \qquad \qquad \qquad \quad \ &\text{if} \ \ p \in V_{m} \setminus V_{m - 1}, \\
H_{m - 1} (u|_{V_{m-1}}) (p) + \left[ -\, (N - 1)\, u(p)\, + \sum\limits_{q \, \in \, \mathcal{U}_{p, m}} u(q) \right] \ &\text{if} \ \ p \in V_{m-1}.
\end{cases} \\
& =  H_{\kappa_{p}} u (p) \ + \sum\limits_{j\,= \, \kappa_{p}+1}^{m} \ \sum \limits_{q \, \in \, \mathcal{U}_{p,\,j}} (u(q) - u(p)).  \qquad \qquad \qquad \qquad \qquad \qquad \qquad \qquad 
\end{eqnarray*}
The difference operator $H_{m}$ gives the total difference between the functional values at a point $p$ and its $j$-neighbours in sets $V_{j}$, for all $\kappa_{p} \le j \le m$. Let $\mathcal{C}(\Sigma_{N}^{+})$ denote the set of all real valued continuous functions on $\Sigma_{N}^{+}$. For $u, \, f \in \mathcal{C}(\Sigma_{N}^{+})$, we say $\Delta u = f$ if, 
\begin{equation}
\label{laplaciandef}
\lim_{m \to \infty} \ \max_{p\, \in\, V_{m} \setminus V_{m - 1}} \left| \frac{H_{m} u (p)}{\mu([p_{1} p_{2} \cdots p_{m + 1}])}\, - f(p) \right| = 0.
\end{equation} 
The operator $\Delta$ is known as the \emph{Laplacian}. The set of all continuous functions $u$ for which $\Delta u$ exists, is called as the domain of the Laplacian, and is denoted by $D_{\mu}$. The image of a function $u$ under the Laplacian can be written pointwise as,
\[ f(x) = \lim \limits_{m \to \infty} N^{m+1} H_{m}u (p^{m}), \]  
where $x \in \Sigma_{N}^{+}$ and $\{p^{m}\}_{m \ge 0} $ is a sequence of points such that $ p^{m} \in V_{m}\setminus V_{m-1}$, converging to $x$.

\section{Energy}
\label{energy section}
In this section, we recall the notion of finite Dirichlet forms and energy on the shift space, as introduced in \cite{arxiv}. Since each $H_{m}$ is a symmetric linear operator on $\ell(V_{m})$ with $V_{m}$ being finite, it induces a natural symmetric bilinear form known as \emph{Dirichlet form} $\mathcal{E}_{H_{m}}$ on $\ell(V_{m})$, which is given by,
\begin{equation}
\label{Dirichlet form on V_m}
\mathcal{E}_{H_{m}} (u,v)\ := \ -\left\langle u, H_{m} v  \right\rangle \ = \ -\sum\limits_{p \,\in \, V_{m}} u(p) \,H_{m} v(p) \ \ \text{ for } u, \, v \in \ell( V_{m}).
\end{equation}
See \cite{kigaminetwork} for the concepts of Dirichlet forms on sequence of finite resistance networks. We denote $\mathcal{E}_{H_{m}} (u, u)$ by $\mathcal{E}_{H_{m}}(u)$ for simplicity. The Dirichlet form satisfies the following.
\begin{enumerate}
\item  $\mathcal{E}_{H_{m}}(u) \ge 0$ for all $u \in \ell(V_{m})$, 
\item  $\mathcal{E}_{H_{m}}(u) = 0$ if and only if $u$ is constant on $V_{m}$ and 
\item For any $u \in \ell(V_{m}),\ \mathcal{E}_{H_{m}}(u) \ge \mathcal{E}_{H_{m}}(\bar{u})$ where $\bar{u}$ is defined by 
\begin{equation}
\label{bar u}
\bar{u} (p)\ \ :=\ \ \begin{cases}
1 & \text{if  } \  u(p) \geq 1, \\
u(p) & \text{if  } \   0 < u(p) < 1, \\
0  &  \text{if  } \  u(p) \leq 0.
\end{cases}
\end{equation}
\end{enumerate} 
Following are some alternate expressions for $\mathcal{E}_{H_{m}}$ which will prove to be useful in most of the calculations in the subsequent sections. Let $u, \, v \in \ell(V_{m})$.
\begin{eqnarray*}
\mathcal{E}_{H_{m}}(u, v)\ & = & \ \frac{1}{2} \sum\limits_{p,q \,\in \, V_{m}} (H_{m})_{p q}  \left( u(p) -u(q) \right) \, \left( v(p) -v(q) \right) \\
& = & \ \frac{1}{2} \ \sum\limits_{i = 0}^{m} \ \sum\limits_{p \, \in \,V_{i}} \ \sum\limits_{q \, \in \, \mathcal{U}_{p,\,i}}  \left( u(p) -u(q) \right) \, \left( v(p) -v(q) \right). 
\end{eqnarray*} 
This sequence of the Dirichlet forms $\{ \mathcal{E}_{H_{m}} \}_{m \ge 0}$, defines a symmetric bilinear operator $\mathcal{E}$ on $\mathcal{C}(\Sigma_{N}^{+})$ as,
\[\mathcal{E}(u, v) \ = \ \lim\limits_{m \to \infty} \mathcal{E}_{H_{m}}\,(u|_{V_{m}}, v|_{V_{m}}). \]
For $v = u$, if the limit $\mathcal{E}(u) = \lim\limits_{m \to \infty} \mathcal{E}_{H_{m}}\,(u|_{V_{m}}) $ is finite, then $\mathcal{E}(u)$ is known as the \textit{energy} of $u$. The set of all such continuous functions having finite energy is called as the \emph{domain of energy}, denoted by  $dom \, \mathcal{E}$. Any function $u_{m} \in \ell(V_{m})$ can be uniquely extended to a continuous function $h : \Sigma_{N}^{+} \longrightarrow \mathbb{R}$ taking constant values on the $(m+1)$-long cylinder sets. Such an extension $h$ of $u_{m}$, is called as an \emph{energy minimizer extension}, due to the fact that this extension is the only function that minimizes the energy in the sense,
\[ \mathcal{E}(h) \ = \ \mathcal{E}_{H_{m}} (u_{m}) \ = \ \min \left\lbrace \mathcal{E}_{H_{n}} (v )\ |\ n > m, \  v \in \ell(V_{n}),\ v|_{V_{m}} = u_{m} \right\rbrace. \]
This property makes the sequence of difference operators $\{H_{m}\}_{m \ge 0}$ compatible, in the sense of Kigami. For a point $p \in V_{m}$, consider the characteristic function $\chi_{q} \in \ell(V_{m})$ of a point $q \in V_{m}$,
\begin{eqnarray*}
\chi_{p} (q) =  
\begin{cases}
1 & \text{ if } \ q = p,\\
0 & \text{ otherwise. }
\end{cases}     
\end{eqnarray*}
The energy minimizer extension of $\chi_{p} $ be denoted by $\chi_{p}^{m} : \Sigma_{N}^{+} \longrightarrow \mathbb{R}$ and is given by,
\begin{equation} 
\label{uhe}
\chi_{p}^{m}\ \ =\ \  
\begin{cases}
1 &  \text{ on } \  [p_1\, p_2\, \cdots\, p_{m+1}] \\
0 &  \text{ elsewhere. }
\end{cases} 
\end{equation}
In general, any function taking constant values on cylinder sets of some particular length is called as \emph{energy minimizer}. These functions are the basic simple functions on $\Sigma_{N}^{+}$ and they play an important role in the discussion. Since an energy minimizer $h$ takes constant values on cylinder sets of length, say $m+1$, for some $m \ge 0$, it satisfies $\Delta h = 0$ and is a harmonic function. Moreover, $ D_{\mu}  \, \subset \,  dom \, \mathcal{E} \, \subset \, \mathcal{C}(\Sigma_{N}^{+})$ with the inclusions being dense. The density can be seen through the fact that harmonic functions, in particular the energy minimizers belong to $D_{\mu}$ and also form a dense subset of $\mathcal{C}(\Sigma_{N}^{+})$. For instance, let $u \in \mathcal{C}(\Sigma_{N}^{+})$ and for each $m \ge 0$, define energy minimizers $u_{m} $  taking constant values on cylinder sets of length $m+1$ as,
\begin{equation}
\label{seq of harm fn}
u_{m} := \sum\limits_{p\, \in \, V_{m}} u(p)\, \chi_{p}^{m}.
\end{equation}
Then $u_{m}$ converges to $u$ uniformly as $m \to \infty$. All the proofs can be found in \cite{arxiv}. 
\medskip

\noindent
We now investigate some more interesting properties of energy $\mathcal{E}$ and $ dom\, \mathcal{E}$. Let us first look into the energy of an energy minimizer $h$, taking constant values on cylinder sets of length $m+1$, for some $m \ge 0$. By definition, 
\[\mathcal{E}(h,u) = \frac{1}{2} \, \lim\limits_{n \to \infty} \sum\limits_{i=0}^{n}\ \sum\limits_{p \, \in \, V_{i}} \ \sum\limits_{q \, \in \, \mathcal{U}_{p,\,i}} \Big( h(p)-h(q) \Big) \Big( u(p)-u(q) \Big).\]
Observe that for each $i \ge m+1$, $h(p) - h(q) = 0$, for all $p \in V_{i}$ and $q \, \in \, \mathcal{U}_{p,\,i}$, since each such pair of points $p$ and $q$ belongs to the same cylinder set of length $m+1$. Therefore, $ \mathcal{E}(h,u) = \mathcal{E}_{H_{m}}(h,u)$. Further, for $u = h$, it follows that $\mathcal{E}(h) = \mathcal{E}_{H_{m}} (h)$. By an abuse of notation, by $ \mathcal{E}_{H_{m}} (h)$ we mean $\mathcal{E}_{H_{m}} (h|_{V_{m}})$ hereafter. We have thus proved the following.
\begin{proposition} \label{energy_min_energy} 
If $h$ is an energy minimizer, then there exists $m \ge 0$ such that $\mathcal{E}(h,u) = \mathcal{E}_{H_{m}}(h,u)$. In particular $\mathcal{E}(h) = \mathcal{E}_{H_{m}} (h)$.
\end{proposition}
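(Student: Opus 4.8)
The plan is to exploit the edge-difference expression for the finite Dirichlet forms recalled above, namely
\[ \mathcal{E}_{H_{n}}(h,u) \ = \ \frac{1}{2}\sum_{i=0}^{n}\sum_{p\in V_{i}}\sum_{q\in\mathcal{U}_{p,i}}\big(h(p)-h(q)\big)\big(u(p)-u(q)\big), \]
and to show that, for an energy minimizer $h$, the inner double sum over a fixed level $i$ contributes nothing once $i$ is large enough. By definition, an energy minimizer $h$ takes constant values on the cylinder sets of some fixed length $m+1$; I would fix this $m$ at the outset and aim to prove that the defining limit of $\mathcal{E}(h,u)$ simply truncates at level $m$.

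The key step is the vanishing of the high-level terms. Fix any $i \ge m+1$, take any $p \in V_{i}$ and any $q \in \mathcal{U}_{p,i}$. By the definition of the deleted neighbourhood one has $q \sim_{i} p$, so $p_{k} = q_{k}$ for every $1 \le k \le i$; in particular $p$ and $q$ agree on their first $m+1$ coordinates and therefore lie in a common cylinder of length $m+1$. Since $h$ is constant on every such cylinder, $h(p) = h(q)$, and so the factor $h(p)-h(q)$ annihilates every summand at level $i$. Consequently, for every $n \ge m$ the partial sum already coincides with its level-$\le m$ truncation, which is precisely $\mathcal{E}_{H_{m}}(h,u)$.

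Passing to the limit then requires no additional work: the partial sums defining $\mathcal{E}(h,u)$ are eventually constant in $n$, so the limit exists automatically and equals the finite expression $\mathcal{E}_{H_{m}}(h,u)$, giving the first assertion. Specializing $u=h$ yields $\mathcal{E}(h) = \mathcal{E}_{H_{m}}(h)$, which is the ``in particular'' clause. I expect no serious obstacle; the only point demanding genuine care is the containment argument for neighbours at levels $i \ge m+1$ --- one must verify, directly against the definitions of $\sim_{i}$ and $\mathcal{U}_{p,i}$, that $i$-neighbours are forced into the same $(m+1)$-cylinder, as this is exactly what makes the telescoping of the series terminate at level $m$ rather than at some larger level.
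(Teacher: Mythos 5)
Your argument is correct and is essentially the paper's own proof: both use the level-decomposed expression for $\mathcal{E}_{H_{n}}(h,u)$ and observe that for $i \ge m+1$ any $p \in V_{i}$ and $q \in \mathcal{U}_{p,i}$ agree on their first $m+1$ coordinates, hence lie in a common $(m+1)$-cylinder on which $h$ is constant, killing all terms beyond level $m$. Your extra care in verifying the neighbour containment from the definition of $\sim_{i}$ is exactly the point the paper also relies on, so there is nothing further to add.
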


\noindent
The energy $\mathcal{E}(u) = \lim\limits_{m \to \infty} \sum\limits_{p, q\, \in \, V_{m}} (H_{m})_{pq} \big( u(p)-u(q) \big)^{2} $ is always non-negative, since all the summands are non-negative. Also, $\mathcal{E}(u) = 0$ if and only if $u$ is constant. We say that a sequence of functions $\{v_{n}\}_{n \ge 0}$ converges to a function $v$ in energy if, $v_{n}$ converges uniformly to $v$ and $\mathcal{E}(v_{n}-v) \to 0$ as $n \to \infty$. 

\begin{lemma}
\label{convergence in energy}
If $u \in dom \, \mathcal{E}$, then the sequence of functions $\{ u_{m} \}_{m \ge 0} $ as defined in equation \eqref{seq of harm fn}, converges to $u$ in energy. 
\end{lemma}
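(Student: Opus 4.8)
The plan is to invoke the definition of convergence in energy given just before the lemma: a sequence $\{u_m\}$ converges to $u$ in energy provided (i) $u_m \to u$ uniformly and (ii) $\mathcal{E}(u_m - u) \to 0$. Condition (i) is already recorded in the discussion surrounding equation \eqref{seq of harm fn}, where it is noted that the energy minimizers $u_m$ converge uniformly to $u$. Hence the whole task reduces to verifying (ii), namely $\mathcal{E}(u_m - u) \to 0$ as $m \to \infty$.

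First I would record the pointwise identity that makes everything collapse: each $u_m$ agrees with $u$ on $V_m$. Indeed, for $p, q \in V_m$ the energy minimizer extension $\chi_p^m$ of \eqref{uhe} satisfies $\chi_p^m(q) = \delta_{pq}$, since two points of $V_m$ lying in a common cylinder of length $m+1$ must coincide; consequently $u_m(q) = \sum_{p \in V_m} u(p)\,\chi_p^m(q) = u(q)$ for every $q \in V_m$, so that $u_m|_{V_m} = u|_{V_m}$. Moreover $u_m$ is itself an energy minimizer (constant on cylinders of length $m+1$), so Proposition \ref{energy_min_energy} applies to it with index $m$.

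Next I would expand the bilinear form. Writing $\mathcal{E}(u_m - u) = \mathcal{E}(u_m) - 2\,\mathcal{E}(u_m, u) + \mathcal{E}(u)$ and applying Proposition \ref{energy_min_energy} to the energy minimizer $u_m$ together with $u_m|_{V_m} = u|_{V_m}$, I get $\mathcal{E}(u_m) = \mathcal{E}_{H_m}(u_m|_{V_m}) = \mathcal{E}_{H_m}(u|_{V_m})$ and $\mathcal{E}(u_m, u) = \mathcal{E}_{H_m}(u_m|_{V_m}, u|_{V_m}) = \mathcal{E}_{H_m}(u|_{V_m})$. Substituting these makes the cross term cancel exactly, leaving
\[ \mathcal{E}(u_m - u) = \mathcal{E}(u) - \mathcal{E}_{H_m}(u|_{V_m}). \]
Equivalently, this is the Pythagorean relation $\mathcal{E}(u) = \mathcal{E}(u_m) + \mathcal{E}(u - u_m)$ coming from the energy-orthogonality $\mathcal{E}(u - u_m, u_m) = 0$. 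Since $u \in dom\,\mathcal{E}$, the defining limit gives $\mathcal{E}_{H_m}(u|_{V_m}) \to \mathcal{E}(u)$, whence $\mathcal{E}(u_m - u) \to 0$, and combined with (i) this is convergence in energy.

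The step needing the most care is the term-by-term passage to the limit in the expansion of $\mathcal{E}(u_m - u)$: I must confirm that $u_m - u \in dom\,\mathcal{E}$ and that each of $\mathcal{E}(u_m)$, $\mathcal{E}(u_m, u)$, $\mathcal{E}(u)$ is a genuine finite limit of the finite forms $\mathcal{E}_{H_n}$, so that distributing the limit over the bilinear expansion is legitimate. The first two limits are supplied by Proposition \ref{energy_min_energy} (they stabilize at index $m$), and the finiteness of $\mathcal{E}(u)$ for $u \in dom\,\mathcal{E}$ secures the third; the identity $u_m|_{V_m} = u|_{V_m}$ is precisely what forces the clean cancellation of the cross term.
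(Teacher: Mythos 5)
Your proposal is correct and follows essentially the same route as the paper's proof: both reduce the problem to the bilinear expansion $\mathcal{E}(u-u_m)=\mathcal{E}(u)+\mathcal{E}(u_m)-2\,\mathcal{E}(u,u_m)$, use Proposition \eqref{energy_min_energy} together with $u_m|_{V_m}=u|_{V_m}$ to identify both $\mathcal{E}(u_m)$ and the cross term with $\mathcal{E}_{H_m}(u|_{V_m})$, and conclude from $\mathcal{E}_{H_m}(u|_{V_m})\to\mathcal{E}(u)$. Your explicit verification that $\chi_p^m(q)=\delta_{pq}$ on $V_m$ is a small addition the paper leaves implicit, but the argument is the same.
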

\begin{proof}
The uniform convergence of $u_{m}$ to $u$ is already established. Now, for $u \in dom \, \mathcal{E}$, consider the functions as defined in equation \eqref{seq of harm fn}. Since each $u_{m}$ is an energy minimizer, by the above proposition, we have $u_{m} \in dom \, \mathcal{E}$ and, 
\begin{equation}
\label{energy_1}
 \mathcal{E}(u_{m}) = \mathcal{E}_{H_{m}}(u_{m}).
\end{equation}
\noindent
Since $u|_{V_{m}} = u_{m}|_{V_{m}}$, we know that the energy of $u$ is,
\[ \mathcal{E}(u) = \lim\limits_{m \to \infty} \mathcal{E}_{H_{m}} (u_{m}). \]
For each $m \ge 0$ observe that, 
\begin{align}
\mathcal{E} (u,\, u_{m}) & = \frac{1}{2} \ \lim\limits_{n \to \infty} \ \sum\limits_{i = 0}^{n} \ \sum\limits_{p \, \in \, V_{i}} \ \sum\limits_{q \, \in \, \mathcal{U}_{p,\,i}} \left( u(p) -u(q) \right)  \left( u_{m}(p) -u_{m}(q) \right) \nonumber\\
& = \frac{1}{2} \ \sum\limits_{i = 0}^{m} \ \sum\limits_{p \, \in \, V_{i} } \ \sum\limits_{q \, \in \, \mathcal{U}_{p,\,i}} \left( u_{m}(p) -u_{m}(q) \right)^{2}\nonumber \\
& = \mathcal{E}_{H_{m}}(u_{m}) \label{energy_2}.
\end{align}
Consider,
\begin{align*}
\mathcal{E}(u - u_{m}) & = \frac{1}{2} \ \lim\limits_{n \to \infty} \ \sum\limits_{i = 0}^{n} \ \sum\limits_{p \, \in \, V_{i} } \ \sum\limits_{q \, \in \,  \mathcal{U}_{p,\,i}}  \big( \,(u - u_{m})(p) - (u - u_{m})(q) \, \big)^{2} \\
& =  \frac{1}{2} \ \lim\limits_{n \to \infty} \ \sum\limits_{i = 0}^{n} \ \sum\limits_{p \, \in \, V_{i}} \ \sum\limits_{q \, \in \, \mathcal{U}_{p,\,i}}  \big( \,(u(p) - u(q)) - (u_{m}(p) - u_{m}(q)) \, \big)^{2} \\
& = \frac{1}{2} \ \lim\limits_{n \to \infty} \ \sum\limits_{i = 0}^{n} \ \sum\limits_{p \, \in \, V_{i} } \ \sum\limits_{q \, \in \, \mathcal{U}_{p,\,i}} \left[ (u(p) - u(q))^{2} \, + \, (u_{m}(p) - u_{m}(q))^{2} \right] \\
& \hspace{2cm} +  \lim\limits_{n \to \infty} \ \sum\limits_{i = 0}^{n} \ \sum\limits_{p \, \in \, V_{i} } \ \sum\limits_{q \, \in \, \mathcal{U}_{p,\,i}} (u(p) - u(q))(u_{m}(p) - u_{m}(q)) \\
& = \mathcal{E}(u) + \mathcal{E}(u_{m}) - 2 \,\mathcal{E}(u,u_{m}).
\end{align*}
Using equations \eqref{energy_1} and \eqref{energy_2}, we obtain,
\[ \mathcal{E}(u - u_{m})  = \mathcal{E}(u) - \mathcal{E}_{H_{m}} (u_{m}).\]
Therefore,
\[ \lim\limits_{m \to \infty} \mathcal{E}(u - u_{m}) \ = \ \mathcal{E}(u) - \lim\limits_{m \to \infty} \mathcal{E}_{H_{m}} (u_{m}) \ = \ 0. \]
\end{proof}

\section{Weak formulation of the Laplacian}
\label{weak formulation section}
As the shift space is totally disconnected and has topological dimension $0$, the topological notion of a boundary is of little significance. This means that we have complete control over the choice of the boundary sets. Our natural candidate for a boundary is any of the sets $V_{M}$ for $M \ge 0$, on which the finite difference operators are defined. This choice is motivated from the study of analysis on finite sets, according to which, a set $V_{m}$ is considered as the boundary of the set $V_{m+1}$. Having set $V_{M}$ as a boundary, define the space of finite energy functions vanishing on the boundary $V_{M}$ as,
\begin{equation*}
dom_{M}\,\mathcal{E}\ := \ \left\lbrace u \in dom \,\mathcal{E} \ : \ u|_{V_{M}} = 0  \right\rbrace.
\end{equation*}
We now propose the following weak formulation of the Laplacian.
\begin{definition}
Let $u \in dom \, \mathcal{E}$ and $f \in \mathcal{C}(\Sigma_{N}^{+})$. We say $\Delta\, u = f$ in weak sense, if there exist some $M \ge 0$ such that 
\begin{equation}
\label{weak laplacian}
\mathcal{E} (u,v) \ = \ - \int\limits_{\Sigma_{N}^{+}} f\, v \,\mathrm{d}\mu, \quad \text{for all } \ v \in dom_{M}\,\mathcal{E}.
\end{equation}
\end{definition}
For each $M \ge 0$, consider the following subsets of the domain of the Laplacian $D_{\mu}$. 
\begin{eqnarray}
\label{new domain of laplacian}
\mathcal{D}_{M} & := & \Bigg\{ u \in D_{\mu} : \exists \, f \in \mathcal{C}(\Sigma_{N}^{+}) \ \text{with} \ \Delta u = f \ \text{satisfying} \nonumber \\ 
& & \hspace{+1cm} \lim_{m \to \infty} \max_{p\, \in\, V_{m} \setminus V_{M}} \left| \frac{H_{m} u (p)}{\mu([p_{1} p_{2} \cdots p_{m + 1}])}\, - f(p) \right| = 0 \Bigg\}. 
\end{eqnarray} 
Each $\mathcal{D}_{M}$ is a linear subspace of $D_{\mu}$ and $\Delta|_{\mathcal{D}_{M}} : \mathcal{D}_{M} \longrightarrow \mathcal{C}(\Sigma_{N}^{+})$ is a linear operator. In each $\mathcal{D}_{M}$, the maximum is taken over the entire set $V_{m} \setminus V_{M}$ and not just $V_{m} \setminus V_{m-1}$ which was the case in $D_{\mu}$. So these are the functions on $\Sigma_{N}^{+}$, with $\Delta u = f$, with $V_{M}$ being the boundary, for which the renormalized sequence $N^{m+1} H_{m} u$ converges to $f$ stronger than for the functions in $D_{\mu}$.  
\medskip

\noindent
It is simple to see that $\{ dom_{m}\, \mathcal{E} \}_{m \ge 0}$ and $\{ \mathcal{D}_{m}\}_{m \ge 0}$ form a nested sequence of sets as,
\[ dom_{0}\, \mathcal{E} \ \supset \ dom_{1}\, \mathcal{E} \ \supset \ dom_{2}\, \mathcal{E} \ \supset \ \cdots \ , \ \text{and}, \ \ \mathcal{D}_{0} \ \subset \ \mathcal{D}_{1} \  \subset \ \mathcal{D}_{2} \ \subset \ \cdots.  \]
Consider $\mathcal{D} := \bigcup\limits_{M \ge 0} \mathcal{D}_{M}$. For the purpose of studying the weak formulation, we focus on the Laplacian operator restricted to the domain $\mathcal{D}$, $\Delta|_{\mathcal{D}} :\mathcal{D} \longrightarrow \mathcal{C}(\Sigma_{N}^{+})$.
\medskip

\noindent
Let $u \in \mathcal{D}_{M}$ and $\Delta u = f$. Our aim is to show that $\Delta u = f $ in the weak sense too. That is, $\ \mathcal{E}(u,v) = - \int\limits_{\Sigma_{N}^{+}} f\, v \,\mathrm{d}\mu$, for all $v \in dom_{M}\,\mathcal{E}$. Each $dom_{M}\, \mathcal{E}$ acts as a set of test functions for the Laplacian $\Delta|_{\mathcal{D}_{M}}$. First we make the following two observations.
\begin{lemma}
\label{chi_p^m as test function}
For $m \ge M+1$ and $p \in V_{m} \setminus V_{M}$, $\chi_{p}^{m} \in dom_{M}\, \mathcal{E}$.
\end{lemma}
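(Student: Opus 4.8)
The plan is to check directly that $\chi_p^m$ meets both requirements in the definition of $dom_M\, \mathcal{E}$, namely that it has finite energy and that it vanishes on $V_M$. The finite-energy requirement needs no real work: by its explicit description \eqref{uhe}, the function $\chi_p^m$ is constant on every cylinder set of length $m+1$, so it is an energy minimizer. Proposition \ref{energy_min_energy} then gives $\mathcal{E}(\chi_p^m) = \mathcal{E}_{H_m}(\chi_p^m)$, a finite sum over the finite set $V_m$, whence $\chi_p^m \in dom\, \mathcal{E}$.

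The content of the lemma is therefore the boundary condition $\chi_p^m|_{V_M} = 0$. Since $\chi_p^m$ takes the value $1$ precisely on the cylinder $[p_1\, \cdots\, p_{m+1}]$ and $0$ off it, this reduces to the purely combinatorial claim that $[p_1\, \cdots\, p_{m+1}]$ contains no point of $V_M$. I would establish this by contradiction. Suppose some $q \in V_M$ lies in the cylinder, so that $q_i = p_i$ for all $1 \le i \le m+1$. A point of $V_M$ has the form $(q_1\, \cdots\, q_M\, \dot{q}_{M+1})$, that is, its coordinates are constant from position $M+1$ onward; since $m \ge M+1$, all the indices $M+1, M+2, \ldots, m+1$ lie in this constant range, giving $q_{M+1} = q_{M+2} = \cdots = q_{m+1}$ and hence $p_{M+1} = p_{M+2} = \cdots = p_{m+1}$.

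Because $p$ itself belongs to $V_m$, it is constant from position $m+1$ onward; combined with the equalities just obtained, this forces $p$ to be constant from position $M+1$ onward, i.e.\ $p \in V_M$, contradicting $p \in V_m \setminus V_M$. Hence no such $q$ exists, $\chi_p^m$ vanishes identically on $V_M$, and the two conditions together give $\chi_p^m \in dom_M\, \mathcal{E}$. I expect the only delicate point to be the index bookkeeping in the middle step: the hypothesis $m \ge M+1$ is exactly what ensures that matching the first $m+1$ coordinates of a point of $V_M$ collapses the block of coordinates of $p$ from position $M+1$ to $m+1$ into a single constant value, which is what lets the constant tail of $p$ propagate back to position $M+1$.
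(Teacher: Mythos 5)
Your proposal is correct and is essentially the paper's argument: the whole content is the combinatorial fact that the cylinder $[p_1\cdots p_{m+1}]$ contains no point of $V_M$, which the paper proves directly by locating the index $\kappa_p \ge M+1$ where $p_{\kappa_p} \ne p_{\kappa_p+1}$ (whereas every $x \in V_M$ satisfies $x_i = x_{i+1}$ for $i \ge M+1$), and which you prove by the equivalent contrapositive that agreement on the first $m+1$ coordinates would force $p \in V_M$. Your explicit check that $\chi_p^m$ has finite energy is a harmless addition the paper leaves implicit.
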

\begin{proof}
If $p = (p_{1}\,\cdots \, p_{m}\, \dot{p}_{m+1} ) \in V_{m} \setminus V_{M},$ then $ \ M+1 \le \kappa_{p} \le m$ and $p_{\kappa_{p}} \neq p_{\kappa_{p}+1}$. Now, if $x \in V_{M}$ then $x_{i} = x_{i+1} $, for all $i \ge M+1$. In particular, $x_{\kappa_{p}} = x_{\kappa_{p}+1}$. Clearly, $x \notin [p_{1}\,p_{2} \cdots p_{m+1}]$ and $\chi_{p}^{m} (x) = 0$, proving $\chi_{p}^{m} \in dom_{M}\, \mathcal{E}$. 
\end{proof}

\begin{lemma}
\label{E(u,chi= -H_m u (p))}
For $m \ge M+1$ and $p \in V_{m} \setminus V_{M}$, $\ \mathcal{E}(u, \chi_{p}^{m}) = - H_{m} u (p)$.
\end{lemma}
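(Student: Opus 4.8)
The plan is to collapse the infinite-level energy pairing to a single finite Dirichlet form and then evaluate that form against a characteristic function. First I would record that $\chi_{p}^{m}$, as described in \eqref{uhe}, is an energy minimizer: it takes constant values on the cylinder sets of length $m+1$, hence has finite energy and lies in $dom\,\mathcal{E}$. Since $u \in \mathcal{D}_{M} \subset D_{\mu} \subset dom\,\mathcal{E}$, the quantity $\mathcal{E}(u,\chi_{p}^{m})$ is well-defined. Now I would invoke Proposition \ref{energy_min_energy} applied to the energy minimizer $h = \chi_{p}^{m}$. Because $\chi_{p}^{m}$ is constant on cylinders of length $m+1$, the level at which the telescoping sum defining $\mathcal{E}$ terminates is exactly $m$, giving $\mathcal{E}(\chi_{p}^{m}, u) = \mathcal{E}_{H_{m}}(\chi_{p}^{m}, u)$. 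Using the symmetry of both $\mathcal{E}$ and $\mathcal{E}_{H_{m}}$ to reorder arguments freely, this reads $\mathcal{E}(u, \chi_{p}^{m}) = \mathcal{E}_{H_{m}}(\chi_{p}^{m}|_{V_{m}},\, u|_{V_{m}})$.

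The second step is to identify the restriction $\chi_{p}^{m}|_{V_{m}}$. For a point $q = (q_{1}\cdots q_{m}\,\dot{q}_{m+1}) \in V_{m}$, formula \eqref{uhe} gives $\chi_{p}^{m}(q) = 1$ precisely when $q \in [p_{1}\cdots p_{m+1}]$, that is, when $q_{i} = p_{i}$ for all $1 \le i \le m+1$; since a point of $V_{m}$ is determined by its first $m+1$ coordinates, this happens if and only if $q = p$. Hence $\chi_{p}^{m}|_{V_{m}} = \chi_{p}$, the characteristic function of $p$ in $V_{m}$. Applying the definition \eqref{Dirichlet form on V_m} of the finite Dirichlet form, once more ordering the arguments so that $u$ occupies the $H_{m}$-slot, yields $\mathcal{E}_{H_{m}}(\chi_{p}, u) = -\langle \chi_{p}, H_{m} u \rangle = -\sum_{q \in V_{m}} \chi_{p}(q)\, H_{m} u(q) = -H_{m} u(p)$ by the sifting property of $\chi_{p}$. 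Chaining the two steps gives $\mathcal{E}(u,\chi_{p}^{m}) = -H_{m}u(p)$, as claimed.

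There is no deep obstacle here; the points requiring care are purely bookkeeping. The first is pinning down which finite level makes the energy form collapse: this is exactly what Proposition \ref{energy_min_energy} supplies, and the $(m+1)$-cylinder structure of $\chi_{p}^{m}$ fixes that level to be $m$. The second is the repeated use of symmetry to align the arguments with the sign convention $\mathcal{E}_{H_{m}}(\cdot,\cdot) = -\langle \cdot, H_{m}\,\cdot\rangle$ of \eqref{Dirichlet form on V_m}. The hypothesis $p \in V_{m}\setminus V_{M}$ is not needed for the computation itself; it is inherited from the surrounding context, where this identity is combined with Lemma \ref{chi_p^m as test function} so that $\chi_{p}^{m}$ serves as a legitimate test function in $dom_{M}\,\mathcal{E}$.
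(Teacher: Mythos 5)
Your proof is correct. The first step is identical to the paper's: both invoke Proposition \ref{energy_min_energy} to collapse $\mathcal{E}(u,\chi_{p}^{m})$ to the finite form $\mathcal{E}_{H_{m}}(u,\chi_{p}^{m})$, and both observe that $\chi_{p}^{m}$ restricted to $V_{m}$ is the point mass $\chi_{p}$. Where you diverge is in evaluating the finite form. The paper uses the difference-quotient expression $\frac{1}{2}\sum_{i=0}^{m}\sum_{x\in V_{i}}\sum_{q\in\mathcal{U}_{x,i}}(u(x)-u(q))(\chi_{p}^{m}(x)-\chi_{p}^{m}(q))$, which forces it to locate $\kappa_{p}$ with $M+1\le\kappa_{p}\le m$, argue that only the levels $i=\kappa_{p},\dots,m$ and only the pairs involving $p$ contribute, and then recognize the resulting sum $\sum_{i=\kappa_{p}}^{m}\sum_{q\in\mathcal{U}_{p,i}}(u(p)-u(q))$ as $-H_{m}u(p)$ from the inductive formula for $H_{m}$. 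You instead go back to the defining identity $\mathcal{E}_{H_{m}}(\cdot,\cdot)=-\langle\cdot,H_{m}\,\cdot\rangle$ of \eqref{Dirichlet form on V_m} and use the sifting property $-\sum_{q\in V_{m}}\chi_{p}(q)\,H_{m}u(q)=-H_{m}u(p)$, which eliminates all of the $\kappa_{p}$ and neighbour-set bookkeeping; the only thing you need beyond the definition is the symmetry of $H_{m}$ (or of $\mathcal{E}_{H_{m}}$), which the paper has already recorded. Your route is shorter and arguably more transparent; the paper's route has the side benefit of exhibiting explicitly which neighbour differences make up $H_{m}u(p)$, a decomposition it reuses in later arguments. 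Your closing remark that the hypothesis $p\in V_{m}\setminus V_{M}$ is not needed for the identity itself, only for the companion Lemma \ref{chi_p^m as test function}, is also accurate.
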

\begin{proof}
Since $\chi_{p}^{m}$ is an energy minimizer taking constant values on cylinder sets of length $m+1$, by proposition \eqref{energy_min_energy}, \[\mathcal{E}(u, \chi_{p}^{m}) = \mathcal{E}_{H_{m}} (u, \chi_{p}^{m}) = \frac{1}{2} \sum\limits_{i = 0}^{m} \ \sum\limits_{x \, \in \, V_{i}} \ \sum\limits_{q \, \in \, \mathcal{U}_{x,\,i}} \left( u(x) -u(q) \right)  \left( \chi_{p}^{m}(x) -\chi_{p}^{m}(q) \right).\]
Note that for any $x \in V_{m}$, $\chi_{p}^{m}(x) = 1 $ if and only if $x = p$.
We know that $p \in V_{\kappa_{p}} \setminus V_{\kappa_{p}-1}$ for some $\kappa_{p}$ such that $M+1 \le \kappa_{p} \le m$. Then for all $\kappa_{p} \le i \le m$ and $q \in \mathcal{U}_{p,\,i}$, we have $ \chi_{p}^{m}(q) = 0$. Due to the fact that the roles of $p$ and $q$ are reversible, we obtain,
\[ \mathcal{E}(u, \chi_{p}^{m}) \ = \ \sum\limits_{i = \kappa_{p}}^{m} \ \sum\limits_{q \, \in \, \mathcal{U}_{p,\,i}} \left( u(p) -u(q) \right) \ = \ - H_{m} u (p).  \]\end{proof}
\medskip

\noindent
For every $m \ge M+1$, consider the function $v_{m} := \sum\limits_{p\, \in \, V_{m}} v(p)\, \chi_{p}^{m}$ as defined in equation \eqref{seq of harm fn} which converge to $v$ uniformly, as $m \to \infty$. Since $v|_{V_{M}} = 0$, $\ v_{m} = \sum\limits_{p\, \in \, V_{m} \setminus V_{M}} v(p)\, \chi_{p}^{m}$. Lemma \eqref{chi_p^m as test function} implies that, $v_{m} \in dom_{M}\,\mathcal{E}$. Note that each $v_{m}$ is an energy minimizer and takes constant values on cylinder sets of length $m+1$. Then by proposition \eqref{energy_min_energy} we have,
\begin{eqnarray}
\lim\limits_{m \to \infty} \mathcal{E}(u,v_{m}) & = & \lim\limits_{m \to \infty} \mathcal{E}_{H_{m}}(u,v_{m}) \notag \\
& = & \lim\limits_{m \to \infty} \ \frac{1}{2} \ \sum\limits_{i = 0}^{m} \ \sum\limits_{p \, \in \, V_{i}} \ \sum\limits_{q \, \in \, \mathcal{U}_{p,\,i}} \left( u(p) -u(q) \right)  \left( v(p) -v(q) \right) \nonumber\\
& = & \mathcal{E}(u,v). \nonumber 
\end{eqnarray} 
Also, by the dominated convergence theorem, 
\[ \lim\limits_{m \to \infty} \int\limits_{\Sigma_{N}^{+}} f\, v_{m} \,\mathrm{d}\mu \ = \ \int\limits_{\Sigma_{N}^{+}} f\, v \,\mathrm{d}\mu. \]
Therefore it is enough to prove that 
\[ \displaystyle{ \lim\limits_{m \to \infty} \Big| \mathcal{E}(u,v_{m}) \, + \, \int\limits_{\Sigma_{N}^{+}} f\, v_{m} \,\mathrm{d}\mu \Big| \ = \ 0}. \]
Now, recalling the basic definition of integration, we may write
\[ \int\limits_{\Sigma_{N}^{+}} f\, v_{m} \,\mathrm{d}\mu \ = \ \lim\limits_{n \to \infty} \sum\limits_{ \left\lbrace [p_{1}\cdots p_{n+1}]\, : \, p_{i}\, \in\, S \right\rbrace }  f(t) \, v_{m}(t) \, \mu([p_{1}\cdots p_{n+1}]), \]
where $t \in [p_{1}\cdots p_{n+1}]$ is a tag for the partition of $\Sigma_{N}^{+}$ by all cylinder sets of length $n+1$. Choose $t = (p_{1}\cdots p_{n}\, \dot{p}_{n+1}) \in V_{n}$. Each such point $t \in [p_{1}\cdots p_{n+1}]$ and the cylinder sets of length $n+1$ are in one to one correspondence. Moreover if $t \in V_{M}$ then $v_{m}(t) = 0$ . Therefore the above sum can be rewritten as,
\begin{equation}
\label{expression for integral}
\int\limits_{\Sigma_{N}^{+}} f\, v_{m} \,\mathrm{d}\mu \ = \ \lim\limits_{n \to \infty} \  \frac{1}{N^{n+1}}\ \sum\limits_{p \,\in \, V_{n} \setminus V_{M}} f(p)\, v_{m}(p).
\end{equation} 
Using the definition of Dirichlet forms on $V_{m}$ as given in equation \eqref{Dirichlet form on V_m}, we get
\begin{eqnarray}
\label{expression for energy}
\mathcal{E}(u,v_{m}) & = & \lim\limits_{n \to \infty} \mathcal{E}_{H_{n}} (u, v_{m}) \notag \\
& = & - \lim\limits_{n \to \infty} \sum\limits_{p \, \in \, V_{n}\setminus V_{M}} (H_{n}u)(p) \, v_{m}(p).
\end{eqnarray}
Making use of \eqref{expression for integral} and \eqref{expression for energy} we obtain,
\begin{eqnarray*}
\lim\limits_{m \to \infty} \Big| \mathcal{E}(u,v_{m}) & + &  \int\limits_{\Sigma_{N}^{+}} f\, v_{m} \,\mathrm{d}\mu \Big|  \\
& = & \lim\limits_{m \to \infty} \ \lim\limits_{n \to \infty} \ \Big| \frac{1}{N^{n+1}} \sum\limits_{p \, \in \, V_{n} \setminus V_{M}} f(p)\, v_{m}(p) - \sum\limits_{p \, \in \, V_{n} \setminus V_{M}} (H_{n}u)(p) \, v_{m}(p)   \Big| \\
& \le & \lim\limits_{m \to \infty} \ \lim\limits_{n \to \infty} \ \frac{1}{N^{n+1}} \sum\limits_{p \, \in \, V_{n} \setminus V_{M}} \ \sup\limits_{x \, \in \, \Sigma_{N}^{+}} |v(x)| \ \max\limits_{p \, \in \, V_{n} \setminus V_{M}} \left| f(p) - N^{n+1} (H_{n} u)(p)  \right| \\
& = & \sup\limits_{x \, \in \, \Sigma_{N}^{+}} |v(x)| \ \lim\limits_{m \to \infty} \ \left\lbrace \lim\limits_{n \to \infty} \ \max\limits_{p \, \in \, V_{n} \setminus V_{M}} \left| f(p) - N^{n+1} (H_{n} u)(p) \right| \frac{N^{n+1} - N^{M+1}}{N^{n+1}} \right\rbrace \\
& = & 0,
\end{eqnarray*}
as, $\frac{N^{n+1} - N^{M+1}}{N^{n+1}} \le 1$ and thus $\Delta u = f$ holds in the weak sense. This suggests that we call the formulation of the Laplacian for the functions in the new domain $\mathcal{D}_{M}$ as described in equation \eqref{new domain of laplacian}, as the \textit{strong formulation}. Moreover, we have the following theorem.
\begin{theorem}
The strong and the weak formulation of the Laplacian agree on $\mathcal{D}_{M}$. 
\end{theorem}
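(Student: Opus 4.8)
The plan is to observe that one implication is already contained in the computation immediately preceding the statement, and to establish the reverse implication using the energy-minimizer test functions $\chi_{p}^{m}$. That computation shows that if $u \in \mathcal{D}_{M}$ with $\Delta u = f$ in the strong sense of \eqref{new domain of laplacian}, then $\mathcal{E}(u,v) = -\int_{\Sigma_{N}^{+}} f\,v\,\mathrm{d}\mu$ for every $v \in dom_{M}\,\mathcal{E}$; that is, $\Delta u = f$ weakly. So it remains to prove the converse: for $u \in \mathcal{D}_{M}$, if the weak identity holds with some $f \in \mathcal{C}(\Sigma_{N}^{+})$, then the same $f$ satisfies the strong limit in \eqref{new domain of laplacian}.

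First I would test the weak identity against the particular functions $v = \chi_{p}^{m}$, for $m \ge M+1$ and $p \in V_{m} \setminus V_{M}$, which are admissible by Lemma \eqref{chi_p^m as test function}. By Lemma \eqref{E(u,chi= -H_m u (p))} the left-hand side of the weak identity equals $-H_{m}u(p)$. For the right-hand side, since $\chi_{p}^{m}$ is the indicator function of the cylinder $[p_{1}\cdots p_{m+1}]$ by \eqref{uhe}, the integral reduces to $\int_{[p_{1}\cdots p_{m+1}]} f\,\mathrm{d}\mu$. Equating the two yields the pointwise identity $H_{m}u(p) = \int_{[p_{1}\cdots p_{m+1}]} f\,\mathrm{d}\mu$, so that after dividing by $\mu([p_{1}\cdots p_{m+1}])$ the renormalized quantity $H_{m}u(p)/\mu([p_{1}\cdots p_{m+1}])$ is exactly the $\mu$-average of $f$ over the cylinder containing $p$.

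The last step is to compare this cylinder average against $f(p)$. Writing the difference as $\mu(C)^{-1}\int_{C}(f-f(p))\,\mathrm{d}\mu$ with $C = [p_{1}\cdots p_{m+1}]$, and noting that $p \in C$, I would bound it by the oscillation $\sup_{x \in C}|f(x)-f(p)|$. Since any two points of $C$ agree in their first $m+1$ coordinates, $C$ has diameter at most $2^{-(m+1)}$ in the metric $d$, so these cylinders shrink uniformly as $m \to \infty$. As $f$ is continuous on the compact space $\Sigma_{N}^{+}$, it is uniformly continuous, whence this oscillation tends to $0$ uniformly over all $p$. Taking the maximum over $p \in V_{m}\setminus V_{M}$ and letting $m \to \infty$ recovers precisely the limit in \eqref{new domain of laplacian}, giving the strong formulation and closing the equivalence.

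The main obstacle I anticipate is securing the uniformity in this final step: the estimate must hold simultaneously for all $p \in V_{m}\setminus V_{M}$, not merely for a fixed point, and it is exactly compactness of $\Sigma_{N}^{+}$ together with the resulting uniform continuity of $f$ that delivers this. A minor technical point worth stating explicitly is that the tag point $p$ lies in its own cylinder $C$, which is what legitimizes bounding the averaged difference by the oscillation of $f$ over $C$.
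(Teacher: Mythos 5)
Your proposal is correct and follows essentially the same route as the paper: the strong $\implies$ weak direction is the computation preceding the theorem, and weak $\implies$ strong is obtained by testing the weak identity against $\chi_{p}^{m}$ and invoking Lemmas \eqref{chi_p^m as test function} and \eqref{E(u,chi= -H_m u (p))} to get $H_{m}u(p) = \int_{[p_{1}\cdots p_{m+1}]} f\,\mathrm{d}\mu$. The only divergence is the final limit: the paper cites the Lebesgue differentiation theorem, while you bound the cylinder average minus $f(p)$ by the oscillation of $f$ and use uniform continuity on the compact space $\Sigma_{N}^{+}$ — which is, if anything, the more appropriate justification, since the definition of $\mathcal{D}_{M}$ demands convergence of the maximum over all $p \in V_{m}\setminus V_{M}$ and uniform continuity supplies exactly that uniformity.
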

\begin{proof}
We have already established the part strong $\implies$ weak, in the discussion before stating the theorem. To prove weak $\implies$ strong on any $\mathcal{D}_{M}$, let $u \in \mathcal{D} \subset dom \, \mathcal{E}$ and $\Delta u = g$ in the weak sense. There exists $M \ge 0 $ such that the equation \eqref{weak laplacian} holds.
Since $\chi_{p}^{m} \in dom_{M}\, \mathcal{E}$,  $\ \mathcal{E} (u,\chi_{p}^{m}) = - \int\limits_{\Sigma_{N}^{+}} g\, \chi_{p}^{m} \,\mathrm{d}\mu$. Then, by lemma \eqref{E(u,chi= -H_m u (p))}, $H_{m} u (p) = \int\limits_{[p_{1} \cdots p_{m+1}]} g\, \mathrm{d}\mu$. Now consider,
\begin{eqnarray*}
\max\limits_{p \in V_{m}\setminus V_{M}} \left| N^{m+1} H_{m} u (p) - g(p)   \right| & = & \max\limits_{p \in V_{m}\setminus V_{M}} \left| \frac{1}{\mu([p_{1} \cdots p_{m + 1}])} \, \int\limits_{[p_{1} \cdots p_{m+1}]} g\, \mathrm{d}\mu \, - \, g(p)   \right| \\
& \to & 0, \ \ \text{as} \ \ m \to \infty.
\end{eqnarray*}
The convergence follows directly from the Lebesgue differentiation theorem, see \cite{heinonen}. Therefore, $u \in \mathcal{D}_{M}$ and $\Delta|_{\mathcal{D}_{M}} u = g$ in the strong sense.
\end{proof}
\medskip

\noindent
We have already established in \cite{arxiv}, that every energy minimizer is a harmonic function. The weak formulation of the Laplacian further guarantees that these are the only harmonic functions in $\mathcal{D}$.
\begin{theorem}
Every harmonic function in $\mathcal{D}$ is an energy minimizer.
\end{theorem}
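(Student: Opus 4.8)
The plan is to exhibit an explicit energy minimizer equal to $u$, namely its level-$M$ approximant. Suppose $u \in \mathcal{D}$ is harmonic, so $u \in \mathcal{D}_{M}$ for some $M \ge 0$ and $\Delta u = 0$. By the theorem just established, the strong equation $\Delta u = 0$ on $\mathcal{D}_{M}$ is equivalent to the weak one, so
\[ \mathcal{E}(u, v) \ = \ -\int_{\Sigma_{N}^{+}} 0 \cdot v \,\mathrm{d}\mu \ = \ 0 \qquad \text{for every } v \in dom_{M}\,\mathcal{E}. \]
This orthogonality of $u$ against every finite-energy test function vanishing on $V_{M}$ is the only consequence of harmonicity I will use.

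The natural candidate is $u_{M} := \sum_{p \in V_{M}} u(p)\, \chi_{p}^{M}$ from equation \eqref{seq of harm fn}, which takes constant values on cylinder sets of length $M+1$ and is therefore an energy minimizer by construction. The key observation is that $u_{M}$ interpolates $u$ on $V_{M}$: since each cylinder $[p_{1}\cdots p_{M+1}]$ contains exactly one point of $V_{M}$, one has $u_{M}|_{V_{M}} = u|_{V_{M}}$. Consequently $w := u - u_{M}$ lies in $dom\,\mathcal{E}$ and vanishes on $V_{M}$, that is $w \in dom_{M}\,\mathcal{E}$, so $w$ is an admissible test function.

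I would then compute $\mathcal{E}(w)$ by bilinearity as $\mathcal{E}(w) = \mathcal{E}(u, w) - \mathcal{E}(u_{M}, w)$. The first term vanishes by harmonicity, since $w \in dom_{M}\,\mathcal{E}$. For the second, Proposition \eqref{energy_min_energy} applied to the energy minimizer $u_{M}$ collapses $\mathcal{E}(u_{M}, \cdot)$ to the finite form $\mathcal{E}_{H_{M}}(u_{M}, \cdot)$, which depends only on values on $V_{M}$; since $u$ and $u_{M}$ coincide there, $\mathcal{E}(u_{M}, u) = \mathcal{E}_{H_{M}}(u_{M}) = \mathcal{E}(u_{M})$, whence $\mathcal{E}(u_{M}, w) = \mathcal{E}(u_{M}) - \mathcal{E}(u_{M}) = 0$. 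Therefore $\mathcal{E}(w) = 0$.

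Finally, since $\mathcal{E}(w) = 0$ forces $w$ to be constant and $w$ vanishes on the nonempty set $V_{M}$, we conclude $w \equiv 0$, i.e. $u = u_{M}$ is an energy minimizer. I expect the main subtlety to lie in justifying $\mathcal{E}(u_{M}, w) = 0$: this is exactly where the locality of the energy of an energy minimizer (Proposition \eqref{energy_min_energy}) does the work, replacing the limiting form $\mathcal{E}$ by a genuinely finite computation on $V_{M}$. A more computational alternative would be to feed the test functions $\chi_{p}^{m}$ into the weak equation, deduce $H_{m} u(p) = 0$ for all $p \in V_{m}\setminus V_{M}$ via Lemma \eqref{E(u,chi= -H_m u (p))}, and unwind the resulting mean-value identities inductively on the level; but that route is combinatorially heavier, and the energy-decomposition argument above seems cleanest.
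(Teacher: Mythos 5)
Your argument is correct, and it takes a genuinely different route from the paper's. The paper feeds the individual test functions $\chi_{x}^{m}$, for $x \in V_{m}\setminus V_{m-1}$ and $m > M$, into the weak equation; via lemma \eqref{E(u,chi= -H_m u (p))} this yields the discrete mean-value identities $\sum_{q \in \mathcal{U}_{x,m}}\left( h(x)-h(q)\right) = 0$, and one then solves, for each $m$-equivalence class, the resulting system of $N-1$ linear equations to conclude that $h$ is constant on every cylinder of length $m$ for all $m > M$. That is exactly the ``computational alternative'' you sketch at the end, though it is lighter than you suggest: one small linear system per cell suffices. Your route instead uses a single global test function $w = u - u_{M}$ together with the rigidity statement that $\mathcal{E}(w) = 0$ forces $w$ to be constant; the key identity $\mathcal{E}(u_{M},u) = \mathcal{E}_{H_{M}}(u_{M}) = \mathcal{E}(u_{M})$ is precisely the computation already carried out in lemma \eqref{convergence in energy} (equation \eqref{energy_2}), so everything you need is in place. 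Two small points are worth making explicit, and you do address both: that $w \in dom\,\mathcal{E}$, which follows from the expansion $\mathcal{E}(u-u_{M}) = \mathcal{E}(u) + \mathcal{E}(u_{M}) - 2\,\mathcal{E}(u,u_{M})$ used in that lemma, and that $u_{M}|_{V_{M}} = u|_{V_{M}}$, which holds because each cylinder of length $M+1$ contains exactly one point of $V_{M}$. What your approach buys is brevity and the avoidance of the linear algebra; what the paper's buys is the pointwise mean-value property of harmonic functions on each cell, which has some independent interest. Both arguments conclude identically that $u$ coincides with $u_{M}$, an energy minimizer constant on cylinders of length $M+1$.
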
 
\begin{proof}
Let $h \in \mathcal{D}$ be a harmonic function, that is, $\Delta h = 0$. By the weak formulation of the Laplacian, there exists $M \ge 0$ such that
$\mathcal{E} (h,v) =  0$ for all $v \in dom_{M}\,\mathcal{E}$. Using lemma \eqref{chi_p^m as test function}, for each $m > M$ and any $x = (x_{1}\,\cdots \, x_{m}\, \dot{x}_{m+1}) \in V_{m} \setminus V_{m-1}$, $\ \chi_{x}^{m} \in \ dom_{M} \,\mathcal{E}.$
Therefore, $\mathcal{E} (h,\chi_{x}^{m}) =  0$. By proposition \eqref{energy_min_energy},
\begin{align*}
\mathcal{E} (h,\chi_{x}^{m}) \ & = \ \mathcal{E}_{H_{m}} (h,\chi_{x}^{m}) \\
& = \ \frac{1}{2} \sum\limits_{i = 0}^{m} \sum\limits_{p \, \in \, V_{i} } \ \sum\limits_{q \, \in \, \mathcal{U}_{p,\,i}} \left( h(p) -h(q) \right)  \left( \chi_{x}^{m}(p) - \chi_{x}^{m}(q) \right) \\
& = \ \sum\limits_{q \,\in \, \mathcal{U}_{x,\,m}} \left( h(x) -h(q) \right)\\
& = \ 0.
\end{align*}
Recall that $\mathcal{U}_{x,\,m} $ contains $N-1$ points which are the $m-$neighbours of $x$ in $V_{m}$. Let us denote these neighbours by $q^{1},\,q^{2},\cdots, q^{N-1} \, \in \,\mathcal{U}_{x,\,m} $. Amongst these $N-1$ points, let $ q^{1},\cdots, q^{N-2} \, \in \,V_{m}\setminus V_{m-1}$ and $q^{N-1} = (x_{1}\,\cdots \, x_{m-1}\, \dot{x}_{m})\in V_{m-1}$. Following the same method for each of the characteristic functions $\chi_{q^{1}}^{m},\, \chi_{q^{2}}^{m},\cdots, \chi_{q^{N-2}}^{m} $, we obtain the following system of $N-1$ equations.
\begin{align*}
-(N-1)\, h(x)\,+\,h(q^{1})\,+\, \cdots \,+\,h(q^{N-2})\,+\,h(q^{N-1}) \ & = \ 0\\
h(x)\,-(N-1)\, h(q^{1})\,+\, \cdots \,+\,h(q^{N-2})\,+\,h(q^{N-1}) \ & = \ 0 \\
& \vdots \\
h(x)\,+\,h(q^{1})\,+\, \cdots \,-(N-1)\, h(q^{N-2})\,+\,h(q^{N-1}) \ & = \ 0. 
\end{align*} 
Solving the above simultaneous system, we get
\[ h(x) \,=\,h(q^{1}) \,=\, \cdots \, = \,h(q^{N-1}),  \]
which implies that $h$ assumes constant values on cylinder sets of length $m$ for all $m > M$. This essentially means that $h$ is an energy minimizer taking constant values on cylinder sets of length $M+1$, proving the theorem.
\end{proof}
\medskip

\noindent
A function $u \in \mathcal{D}$, belongs to $\mathcal{D}_{M}$ for some $M \ge 0$. In that case $V_{M}$ is chosen to be a boundary for $\Sigma_{N}^{+}$. Thus, for a function $u$ in $\mathcal{D}$, there is a naturally associated boundary set $V_{M}$ for some $M \ge 0$, which is particular to the function $u$. Note that we do not require a fixed boundary for all the functions in $\mathcal{D}$. Further, since the set $V_{0} $ is contained in $V_{M}$ for all $M \ge 0$, it is enough to have the boundary values specified only on the set $V_{0}$, for the purpose of solving any kind of boundary value problem. In \cite{arxiv}, the Dirichlet boundary value problem was solved for the operator $\Delta : D_{\mu} \longrightarrow \mathcal{C}(\Sigma_{N}^{+})$. Interestingly, the solution to this problem obtained therein can be seen to be belonging to $\mathcal{D}$. This point will be apparent following the discussion in section \eqref{boundary problem}. Therefore, we will restrict our study of the Laplacian only on the domain $\mathcal{D}$. The improved version of the Dirichlet boundary value problem is stated as follows. We do not present the proof here, as the arguments run along the same lines as in \cite{arxiv}.

\noindent 
\begin{theorem}
For any given $f \in \mathcal{C}(\Sigma_{N}^{+}) \text{ and } \zeta \in \ell(V_{0})$,  there exists a continuous function $u \in \mathcal{D}_{0}$ such that the following holds:
\[\Delta u = f \ \ \ \text{subject to} \ \ \ u|_{V_{0}} = \zeta. \]
This solution is unique upto harmonic functions taking value $0$ on $V_{0}$.
\end{theorem}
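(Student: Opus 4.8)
The plan is to reduce to zero boundary data, construct a particular solution by a discrete level-by-level procedure that forces the identity furnished by the weak formulation, and then deduce uniqueness from the characterization of harmonic functions established above.

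First I would dispose of the boundary data. Let $h_{\zeta}$ be the energy minimizer taking the constant value $\zeta(\dot{l})$ on each length-one cylinder $[l]$; by \eqref{uhe} this is $h_{\zeta}=\sum_{l\in S}\zeta(\dot{l})\,\chi_{(\dot{l})}^{0}$, it satisfies $h_{\zeta}|_{V_{0}}=\zeta$, and, being an energy minimizer that is constant on length-one cylinders, it is harmonic and lies in $\mathcal{D}_{0}$. Since $\mathcal{D}_{0}$ is a linear subspace, it suffices to produce $v\in\mathcal{D}_{0}$ with $\Delta v=f$ and $v|_{V_{0}}=0$, and then set $u:=v+h_{\zeta}$.

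Next comes the construction of $v$, which is the heart of the argument. I would define $v$ on the dense set $V_{*}$ by setting $v|_{V_{0}}=0$ and, for each $m\ge 1$, requiring
\[ H_{m}v(p)\ =\ \int_{[p_{1}\cdots p_{m+1}]} f\,\mathrm{d}\mu \qquad \text{for every } p\in V_{m}\setminus V_{m-1}. \]
Restricted to a single $m$-equivalence class, which contains exactly one point of $V_{m-1}$ and $N-1$ new points (denote the latter by $q_{k}$, $k\ne p_{m}$), this is a linear system in the $N-1$ new values whose coefficient matrix has $-(N-1)$ on the diagonal and $1$ off it; its eigenvalues are $-1$ and $-N$, so it is invertible and the new values are uniquely determined by the single old value. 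The delicate point, and the one I expect to be the main obstacle, is consistency across levels: the displayed identity is implicitly also demanded at the old points $p\in V_{m-1}\setminus V_{0}$, and one must check it is preserved rather than destroyed when passing from level $m-1$ to level $m$. I would verify this by summing the $N-1$ new-point equations over a fixed class; the mutual differences among new points cancel antisymmetrically, leaving $(N-1)\,v(p)-\sum_{k\ne p_{m}}v(q_{k})=\sum_{k\ne p_{m}}\int_{[p_{1}\cdots p_{m}k]}f\,\mathrm{d}\mu$. Combining this with $H_{m}v(p)-H_{m-1}v(p)=\sum_{q\in\mathcal{U}_{p,m}}(v(q)-v(p))$, the inductive hypothesis $H_{m-1}v(p)=\int_{[p_{1}\cdots p_{m}]}f\,\mathrm{d}\mu$, and additivity of $\mu$ over the sub-cylinders together with $p_{m+1}=p_{m}$ for $p\in V_{m-1}$, yields $H_{m}v(p)=\int_{[p_{1}\cdots p_{m+1}]}f\,\mathrm{d}\mu$. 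Thus the identity propagates and holds for every $p\in V_{m}\setminus V_{0}$.

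It remains to show $v$ is a continuous finite-energy function satisfying the strong convergence defining $\mathcal{D}_{0}$. The new oscillations $v(q_{k})-v(p)$ at level $m$ are the inverse of the above matrix applied to a right-hand side of size $O(N^{-(m+1)}\|f\|_{\infty})$, hence are themselves $O(N^{-(m+1)}\|f\|_{\infty})$. This makes the differences $\|v_{m+1}-v_{m}\|_{\infty}$ of the energy-minimizer extensions $v_{m}=\sum_{p\in V_{m}}v(p)\,\chi_{p}^{m}$ from \eqref{seq of harm fn} decay geometrically, so $\{v_{m}\}$ is uniformly Cauchy and converges uniformly to a continuous function agreeing with the constructed values on $V_{*}$; the same decay makes the level-$i$ energy contributions, of order $N^{i}\cdot N^{-2(i+1)}=N^{-i-2}$, summable, so $v\in dom\,\mathcal{E}$ (which is in any case automatic once $v\in D_{\mu}$, by the stated inclusion $D_{\mu}\subset dom\,\mathcal{E}$). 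Finally, the propagated identity gives $N^{m+1}H_{m}v(p)=\mu([p_{1}\cdots p_{m+1}])^{-1}\int_{[p_{1}\cdots p_{m+1}]}f\,\mathrm{d}\mu$, the average of $f$ over a cylinder of vanishing diameter containing $p$; by uniform continuity of $f$ on the compact space $\Sigma_{N}^{+}$ this tends to $f(p)$ uniformly over all $p\in V_{m}\setminus V_{0}$, which is exactly the condition in \eqref{new domain of laplacian} for $M=0$. Hence $v\in\mathcal{D}_{0}$ with $\Delta v=f$, and $u=v+h_{\zeta}$ solves the problem.

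For uniqueness, if $u_{1},u_{2}\in\mathcal{D}_{0}$ are two solutions then $u_{1}-u_{2}\in\mathcal{D}$ is harmonic and vanishes on $V_{0}$; by the preceding theorem that every harmonic function in $\mathcal{D}$ is an energy minimizer, $u_{1}-u_{2}$ is an energy minimizer vanishing on $V_{0}$. Conversely, adding any such function to a solution preserves both $\Delta u=f$ and the boundary condition, so the solution is unique exactly up to harmonic functions vanishing on $V_{0}$.
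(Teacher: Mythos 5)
Your proof is correct, but it takes a genuinely different route from the paper's. The paper does not prove this theorem in situ: it defers to the Green's-operator construction of \cite{arxiv}, i.e.\ it takes $u = -G_{\mu}f + h_{\zeta}$ with $G_{\mu}$ the integral operator whose kernel is the explicit Green's function $g(x,y)$, and the membership $-G_{\mu}f \in \mathcal{D}_{0}$ is exactly what the final theorem of Section 6 verifies via the identity $H_{m}(G_{\mu}f)(p) = -\int_{\Sigma_{N}^{+}}\chi_{p}^{m} f\,\mathrm{d}\mu$. You instead build the particular solution $v$ by imposing $H_{m}v(p)=\int_{[p_{1}\cdots p_{m+1}]}f\,\mathrm{d}\mu$ at the new points of each level and inverting the $(N-1)\times(N-1)$ matrix $J-NI$ class by class; your cross-level consistency computation (summing the new-point equations, using additivity of $\mu$ over sub-cylinders and $p_{m+1}=p_{m}$ for old points) is essentially the same calculation the paper performs inside Theorem \ref{N.D.of G.O.} when it propagates $H_{m}(G_{\mu}f)(p)=-\int\chi_{p}^{m}f\,\mathrm{d}\mu$ from $\kappa_{p}$ to $m$, and your closing uniform-continuity step coincides with the paper's proof that $G_{\mu}f\in\mathcal{D}_{0}$. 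What your approach buys is self-containedness: you never need the kernel $g$, the operators $G_{m}$, or the measure-zero bookkeeping for the deleted diagonal, and the $O(N^{-(m+1)})$ oscillation bound you extract from the matrix inversion gives continuity and finite energy directly. What the paper's approach buys is an explicit closed-form solution operator $f\mapsto G_{\mu}f$, which it then reuses for the Neumann problem and for computing the Neumann derivatives on $V_{0}$. Two cosmetic remarks: your count of level-$i$ energy terms should be $N^{i+1}(N-1)$ ordered pairs rather than $N^{i}$, which does not affect summability; and since the characterization theorem forces a harmonic function in $\mathcal{D}_{0}$ to be constant on length-one cylinders, a harmonic function in $\mathcal{D}_{0}$ vanishing on $V_{0}$ is identically zero, so within $\mathcal{D}_{0}$ your solution is in fact unique --- your phrasing of uniqueness is still consistent with the statement.
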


\section{Neumann derivative}
\label{neumann derivative}
In this section, we obtain a relation between the Laplacian and the energy of a function in a more general form than the weak formulation defined in the previous section. For that purpose, we define the concept of the Neumann derivative of a function in $\mathcal{D}$ at the boundary points. As a direct corollary to this result, we derive a relation between the Laplacian and its normal derivatives at the boundary points. This formula has a natural resemblance with the Gauss-Green's formula in classical calculus.
%
%

\begin{lemma}
Let $u \in \mathcal{D}$ and $V_{M}$ be the corresponding boundary for $u$. Then for each $p \in V_{M}$, the limit $\lim\limits_{m \to \infty} \, H_{m} u (p) $ exists.
\end{lemma}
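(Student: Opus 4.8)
The plan is to fix $u \in \mathcal{D}_{M}$ with $\Delta u = f$ and a point $p = (p_{1}\cdots p_{M}\,\dot{p}_{M+1}) \in V_{M}$, write $a := p_{M+1}$ for its eventually repeated symbol, and exhibit $\lim_{m} H_{m} u (p)$ as the sum of an absolutely convergent series. First I would telescope using the recursive definition of $H_{m}$. Since $p \in V_{M} \subset V_{m-1}$ for every $m > M$, the second branch of that definition gives $H_{m} u(p) - H_{m-1} u(p) = -(N-1)u(p) + \sum_{q \in \mathcal{U}_{p,m}} u(q) = \sum_{q \in \mathcal{U}_{p,m}}(u(q) - u(p))$, so that
\[ H_{m} u(p) \ = \ H_{M} u(p) \ + \ \sum_{j=M+1}^{m} \ \sum_{q \in \mathcal{U}_{p,j}}\big(u(q) - u(p)\big). \]
Existence of the limit thus reduces to the convergence of the series $\sum_{j} \big( \sum_{q \in \mathcal{U}_{p,j}}(u(q)-u(p)) \big)$.

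Next I would identify the $j$-neighbours of $p$ explicitly. For $j > M$ the elements of $\mathcal{U}_{p,j}$ are exactly the points $q_{j}^{(b)} := (p_{1}\cdots p_{M}\,\underbrace{a\cdots a}_{j-M}\,\dot{b})$ with $b \in S \setminus \{a\}$, each lying in $V_{j} \setminus V_{j-1}$ and hence in $V_{j}\setminus V_{M}$. The crucial observation is that $p$ itself is a $j$-neighbour of every $q_{j}^{(b)}$; indeed $\mathcal{U}_{q_{j}^{(b)},\,j} = \{p\} \cup \{ q_{j}^{(c)} : c \in S \setminus \{a,b\} \}$. Writing out $H_{j} u(q_{j}^{(b)}) = \sum_{r \in \mathcal{U}_{q_{j}^{(b)},\,j}}(u(r) - u(q_{j}^{(b)}))$ and summing over $b \in S\setminus\{a\}$, the contributions indexed by unordered pairs $\{b,c\} \subset S \setminus \{a\}$ cancel antisymmetrically, leaving the identity
\[ \sum_{q \in \mathcal{U}_{p,j}}\big(u(q) - u(p)\big) \ = \ -\sum_{b \in S \setminus \{a\}} H_{j} u\big(q_{j}^{(b)}\big). \]
This identity, which converts each telescoping increment at the boundary point $p$ into a sum of Laplacian values at the genuinely interior points $q_{j}^{(b)} \in V_{j}\setminus V_{M}$, is the heart of the argument and the step I expect to be the main obstacle, since it is the only place where the combinatorial structure of the neighbour relation enters in an essential way.

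Finally I would invoke the defining strong convergence of $\mathcal{D}_{M}$. Since each $q_{j}^{(b)} \in V_{j}\setminus V_{M}$ and the length-$(j+1)$ cylinder determined by $q_{j}^{(b)}$ has measure $N^{-(j+1)}$, there is a sequence $\varepsilon_{j} \to 0$ with $\big| H_{j} u(q_{j}^{(b)}) \big| \le N^{-(j+1)}\big(\|f\|_{\infty} + \varepsilon_{j}\big)$. Hence the $j$-th increment is bounded by $(N-1)\,N^{-(j+1)}\big(\|f\|_{\infty} + \varepsilon_{j}\big)$, which is summable thanks to the geometric factor $N^{-j}$. The series therefore converges absolutely, and
\[ \lim_{m \to \infty} H_{m} u(p) \ = \ H_{M} u(p) \ - \ \sum_{j=M+1}^{\infty} \ \sum_{b \in S\setminus\{a\}} H_{j} u\big(q_{j}^{(b)}\big) \]
exists, completing the proof.
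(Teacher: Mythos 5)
Your proof is correct. It shares the paper's overall skeleton (telescoping $H_{m}u(p)-H_{n}u(p)=\sum_{j=n+1}^{m}\sum_{q\in\mathcal{U}_{p,j}}(u(q)-u(p))$ and then showing the increments decay geometrically), but the way you control each increment is genuinely different. The paper simply asserts a pointwise difference estimate: there exist $C>0$ and $M_{0}\ge M$ such that $|u(q)-u(p)|\le C/N^{i+1}$ for all $i\ge M_{0}$, $p\in V_{i}$, $q\in\mathcal{U}_{p,i}$, and then sums the geometric series; it does not justify that estimate within the proof (it can be extracted from the definition of $\mathcal{D}_{M}$ by inverting the linear relations $H_{i}u(q)=\big(\sum_{r\sim_{i}q}u(r)\big)-N\,u(q)$ on each $i$-equivalence class, but that work is left implicit). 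You instead prove the exact combinatorial identity $\sum_{q\in\mathcal{U}_{p,j}}(u(q)-u(p))=-\sum_{b\in S\setminus\{a\}}H_{j}u\big(q_{j}^{(b)}\big)$, which converts the whole $j$-th increment into Laplacian data at the interior points $q_{j}^{(b)}\in V_{j}\setminus V_{M}$, where the bound $|H_{j}u(q)|\le N^{-(j+1)}(\|f\|_{\infty}+\varepsilon_{j})$ is immediate from the defining property of $\mathcal{D}_{M}$. This buys you a more self-contained argument (no unproved intermediate estimate) and, as a bonus, an explicit series representation of $(du)(p)$ in terms of $H_{j}u$ at interior neighbours; the paper's route, once its asserted estimate is granted, is shorter and applies the bound pointwise to each difference rather than only to their sum.
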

\begin{proof}
Let $u \in \mathcal{D}$ with $V_{M}$ being the corresponding boundary. Then there exists a positive constant $C \in \mathbb{R}$, and $M_{0}\ge M$ such that for all $i \ge M_{0}$, $p \in V_{i}$ and $q \in \mathcal{U}_{p,\, i} $, $\left| u(q)-u(p)   \right| \le \frac{C}{N^{i+1}}$. Let $p \in V_{M}$ and $m > n \ge M_{0}$.
\begin{eqnarray*}
\left| H_{m} u (p) - H_{n} u (p)  \right| & = & \left| \sum\limits_{i= n+1}^{m} \  \sum\limits_{q \, \in \, \mathcal{U}_{p,\, i}} \big( u(q)-u(p) \big) \right| \\
& \le & C \, \frac{(N-1)}{N^{n+1}} \sum\limits_{i= 1}^{m-n} \frac{1}{N^{i}}
\end{eqnarray*}
which converges to $0$ as $m , n \to \infty$. Thus for each $p \in V_{M}$, the sequence $\{ H_{m} u (p)\}_{m \ge 0}$ is Cauchy and hence convergent.
\end{proof}

\begin{definition}
Let $u \in \mathcal{D}$ and $V_{M}$ be the corresponding boundary for $u$. The Neumann derivative of $u$ at $p \in V_{M}$, denoted by $(du) (p)$,  is defined as, 
\begin{equation}
\label{Neumann derivative}
(du) (p) \ := \ \lim\limits_{m \to \infty} -\, H_{m} u (p).
\end{equation}   
\end{definition}

\begin{theorem}
Let $u \in \mathcal{D}$, $v \in dom \, \mathcal{E}$ and $V_{M}$ be the corresponding boundary for $u$. Then,
\begin{equation}
\label{Gauss-Green1}
\mathcal{E}(u,v) \ = \ \sum\limits_{p\, \in\, V_{M}} v(p)\,(du) (p) \,-\, \int\limits_{\Sigma_{N}^{+}} (\Delta u) \,v \, \mathrm{d}\mu.
\end{equation}
Moreover, 
\begin{equation}
\label{Gauss-Green2}
\sum\limits_{p\, \in\, V_{M}} (du) (p) \ = \ \int\limits_{\Sigma_{N}^{+}} \Delta u  \, \mathrm{d}\mu.
\end{equation}
\end{theorem}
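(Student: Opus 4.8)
The plan is to fix $u \in \mathcal{D}$ with its associated boundary $V_{M}$, so that $u \in \mathcal{D}_{M}$ and $\Delta u = f$ for some $f \in \mathcal{C}(\Sigma_{N}^{+})$, take an arbitrary $v \in dom\,\mathcal{E}$, and approximate $v$ by the energy minimizers $v_{m} = \sum_{p \in V_{m}} v(p)\,\chi_{p}^{m}$ of equation \eqref{seq of harm fn}. The identity will come out of computing $\mathcal{E}(u,v_{m})$ exactly and then letting $m \to \infty$. The essential difference from the weak formulation section is that here $v$ need \emph{not} vanish on $V_{M}$, so $v_{m}$ keeps its boundary terms $\sum_{p \in V_{M}} v(p)\,\chi_{p}^{m}$, and it is precisely these terms that will produce the Neumann boundary contribution.

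I would first record two reductions. One: $\mathcal{E}(u,v) = \lim_{m \to \infty} \mathcal{E}(u,v_{m})$. By Lemma \eqref{convergence in energy}, $v_{m} \to v$ in energy, and the Cauchy--Schwarz inequality for the non-negative symmetric bilinear form $\mathcal{E}$ gives $|\mathcal{E}(u, v - v_{m})| \le \sqrt{\mathcal{E}(u)}\,\sqrt{\mathcal{E}(v - v_{m})} \to 0$. Two: I would apply Lemma \eqref{E(u,chi= -H_m u (p))} at \emph{every} $p \in V_{m}$, including the boundary points $p \in V_{M}$. Its proof carries over verbatim to this case: for $p \in V_{M}$ with $p \in V_{\kappa_{p}} \setminus V_{\kappa_{p}-1}$ (now $\kappa_{p} \le M$), one still has $\chi_{p}^{m}(q) = 0$ for all $q \in \mathcal{U}_{p,\,i}$ and $\kappa_{p} \le i \le m$, and no other point of any $V_{i}$ lies in $[p_{1}\cdots p_{m+1}]$, so $\mathcal{E}(u, \chi_{p}^{m}) = - H_{m} u (p)$ in fact holds on all of $V_{m}$. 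By bilinearity this yields $\mathcal{E}(u, v_{m}) = -\sum_{p \in V_{m}} v(p)\, H_{m} u (p)$, which I split as $-\sum_{p \in V_{M}} v(p)\, H_{m} u (p) \;-\; \sum_{p \in V_{m} \setminus V_{M}} v(p)\, H_{m} u (p)$.

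Next I would pass to the limit in the two pieces. The boundary sum is finite, with $|V_{M}| = N^{M+1}$ fixed, so the limit goes inside termwise and gives $-\sum_{p \in V_{M}} v(p)\lim_{m} H_{m} u (p) = \sum_{p \in V_{M}} v(p)\,(du)(p)$ by the definition \eqref{Neumann derivative}. For the interior sum I would write it as $-\frac{1}{N^{m+1}}\sum_{p \in V_{m}\setminus V_{M}} v(p)\, N^{m+1} H_{m} u (p)$ and compare it with the Riemann sum $\frac{1}{N^{m+1}}\sum_{p \in V_{m}\setminus V_{M}} f(p)\,v(p)$; the latter converges to $\int_{\Sigma_{N}^{+}} f\,v\,\mathrm{d}\mu$ exactly as in \eqref{expression for integral} (the excised $V_{M}$-terms contribute only $O(N^{-(m+1)})$ since $V_{M}$ is finite), while the difference between the two is bounded by $\sup_{x}|v(x)|\cdot \max_{p \in V_{m}\setminus V_{M}}|N^{m+1} H_{m} u (p) - f(p)|$, which tends to $0$ by the defining property \eqref{new domain of laplacian} of $\mathcal{D}_{M}$. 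Hence the interior sum tends to $-\int_{\Sigma_{N}^{+}} (\Delta u)\,v\,\mathrm{d}\mu$, and summing the two limits gives \eqref{Gauss-Green1}. Finally, \eqref{Gauss-Green2} is immediate on taking $v \equiv 1 \in dom\,\mathcal{E}$: since constants make every factor $v(p)-v(q)$ vanish, $\mathcal{E}(u,1) = 0$, and \eqref{Gauss-Green1} collapses to $0 = \sum_{p \in V_{M}}(du)(p) - \int_{\Sigma_{N}^{+}} \Delta u\,\mathrm{d}\mu$.

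The genuine obstacle is the interior-sum step: the uniform replacement of $N^{m+1} H_{m} u (p)$ by $f(p)$ over all of $V_{m}\setminus V_{M}$ together with the identification of the limit as the integral. This is exactly where the stronger convergence built into $\mathcal{D}_{M}$ (rather than the weaker one in $D_{\mu}$) is indispensable, and where one must check that the deleted boundary contribution is asymptotically negligible. By comparison, the termwise limit of the finite boundary sum and the extension of Lemma \eqref{E(u,chi= -H_m u (p))} to boundary points are routine once the bilinear expansion is set up.
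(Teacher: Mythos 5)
Your proof is correct, and it reaches the identity by a route that differs from the paper's at the one place that matters. Both arguments rest on the same finite-level decomposition $\mathcal{E}_{H_{m}}(u,v) = -\sum_{p \in V_{M}} v(p)\,H_{m}u(p) - \sum_{p \in V_{m}\setminus V_{M}} v(p)\,H_{m}u(p)$ (the paper gets it from $-\langle v, H_{m}u\rangle$ directly; you get it from the extension of Lemma \eqref{E(u,chi= -H_m u (p))} to all of $V_{m}$ together with bilinearity — your verification that the lemma's proof survives for $p \in V_{M}$ is correct), and both send the finite boundary sum to $\sum_{p\in V_{M}} v(p)(du)(p)$ termwise. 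The difference is in the interior sum: the paper replaces $v$ by the auxiliary function $\tilde{v}$ that vanishes on $V_{M}$ and agrees with $v$ elsewhere, identifies the interior sum with $\mathcal{E}_{H_{m}}(u,\tilde{v})$, and then invokes the already-proved weak formulation with $\tilde{v}$ as a test function; you instead re-run the Riemann-sum estimate from Section \eqref{weak formulation section} directly, using the uniform convergence $\max_{p\in V_{m}\setminus V_{M}}|N^{m+1}H_{m}u(p)-f(p)|\to 0$ built into $\mathcal{D}_{M}$ and noting that the excised $V_{M}$-tags are asymptotically negligible. Your version is more self-contained and in fact more robust: zeroing a finite-energy function on the finite set $V_{M}$ generically destroys finite energy (the terms $\sum_{q\in\mathcal{U}_{p,i}} v(q)^{2}$ for $p\in V_{M}$ accumulate over $i$ unless $v(p)=0$), so $\tilde{v}$ need not lie in $dom_{M}\,\mathcal{E}$ and cannot literally be used as a test function in the weak formulation; your direct estimate sidesteps this entirely. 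Two minor remarks: your Cauchy--Schwarz reduction $\mathcal{E}(u,v)=\lim_{m}\mathcal{E}(u,v_{m})$ is valid but unnecessary, since $\mathcal{E}(u,v_{m})=\mathcal{E}_{H_{m}}(u,v_{m})=\mathcal{E}_{H_{m}}(u,v)$ and the limit is then the definition of $\mathcal{E}(u,v)$; and the deduction of \eqref{Gauss-Green2} from $v\equiv 1$ matches the paper.
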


\begin{proof}
Let $u \in \mathcal{D}$ and $v \in dom \, \mathcal{E}$. By the definition of the Dirichlet form, for all $m \ge M$,
\begin{align*}
\mathcal{E}_{H_{m}}(u,v) \ &= \ -\left\langle v,\, H_{m}u  \right\rangle \\
&= \ - \sum\limits_{p \, \in \, V_{M}} v(p)\, H_{m}u(p) \ - \sum\limits_{p \, \in \, V_{m} \setminus V_{M}} v(p)\, H_{m}u(p).
\end{align*}
For each $v \in dom\,\mathcal{E}$, consider $\tilde{v} \in dom_{M}\, \mathcal{E}$ such that $\tilde{v}|_{\,\Sigma_{N}^{+} \setminus V_{M}} = v$ and $\tilde{v}|_{V_{M}} = 0$. Then,
\begin{equation*}
\mathcal{E}_{H_{m}}(u,\tilde{v}) \ = \ -\sum\limits_{p \, \in \, V_{m} \setminus V_{M}} \tilde{v}(p)\, H_{m}u(p).
\end{equation*}
Therefore,
\begin{equation*}
\mathcal{E}_{H_{m}}(u,v) \ = \ - \sum\limits_{p \, \in \, V_{M}} v(p)\, H_{m}u(p) \, + \, \mathcal{E}_{H_{m}}(u,\tilde{v}).
\end{equation*}
Taking limit as $m\to \infty$, using the weak formulation for $\Delta u$ and the definition of the normal derivative of $u$, we obtain,
\begin{align*}
\mathcal{E}(u,v) \ &= \ - \sum\limits_{p \, \in \, V_{M}} v(p)\, \left( \lim\limits_{m \to \infty} H_{m}u(p) \right) \, + \, \mathcal{E}(u,\tilde{v}) \\
&= \ \sum\limits_{p\, \in\, V_{M}} v(p)\,(du) (p) \,-\, \int\limits_{\Sigma_{N}^{+}} (\Delta u) \,\tilde{v} \, \mathrm{d}\mu.
\end{align*}
Since $\tilde{v}$ agrees with $v$ except for a set of $\mu$ measure zero, $\int\limits_{\Sigma_{N}^{+}} (\Delta u) \,\tilde{v} \, \mathrm{d}\mu \ = \ \int\limits_{\Sigma_{N}^{+}} (\Delta u) \,v \, \mathrm{d}\mu$. Thus, \eqref{Gauss-Green1} holds. Further, \eqref{Gauss-Green2} can be obtained for $v \equiv 1$ in \eqref{Gauss-Green1}. 
\end{proof}
\medskip

\noindent
The analogous Gauss-Green's formula on $\Sigma_{N}^{+}$ is easily obtained as a corollary to the above theorem due to the symmetry of the energy $\mathcal{E}$.
\begin{corollary}[Gauss-Green's formula]
For $u, v \in \mathcal{D}_{M}$,
\begin{equation*}
\int\limits_{\Sigma_{N}^{+}} \left( v\, \Delta u \, - \, u \, \Delta v   \right)  \, \mathrm{d}\mu \ = \ \sum\limits_{p\, \in\, V_{M}} \big( v(p) (du) (p) \, - \, u(p) (dv) (p) \big) .
\end{equation*}
\end{corollary}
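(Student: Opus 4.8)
The plan is to obtain the formula as an immediate consequence of the preceding theorem, applied twice with the roles of the two functions interchanged, together with the symmetry of the bilinear energy form $\mathcal{E}$.

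First I would note that both $u$ and $v$ belong to $\mathcal{D}_{M}$, hence to $\mathcal{D} \subset dom\,\mathcal{E}$, and that $V_{M}$ serves as a corresponding boundary for each of them. Consequently either function qualifies simultaneously as the element ``$u \in \mathcal{D}$'' and as the test function ``$v \in dom\,\mathcal{E}$'' demanded by the hypotheses of the previous theorem, so that equation \eqref{Gauss-Green1} is available for both orderings of the pair. Applying \eqref{Gauss-Green1} to the ordered pair $(u,v)$ produces one identity expressing $\mathcal{E}(u,v)$ in terms of the boundary sum $\sum_{p \in V_{M}} v(p)(du)(p)$ and the integral $\int_{\Sigma_{N}^{+}}(\Delta u)\,v\,\mathrm{d}\mu$; applying it to $(v,u)$ produces the analogous identity for $\mathcal{E}(v,u)$ with $u$ and $v$ interchanged throughout.

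Next I would invoke the symmetry $\mathcal{E}(u,v) = \mathcal{E}(v,u)$, which holds because $\mathcal{E}$ is a symmetric bilinear form (Section \eqref{energy section}). Subtracting the second identity from the first cancels the two left-hand energy terms, and collecting the boundary sums on one side and the integrals on the other yields exactly
\[
\int_{\Sigma_{N}^{+}}\big(v\,\Delta u - u\,\Delta v\big)\,\mathrm{d}\mu \ = \ \sum_{p \in V_{M}}\big(v(p)(du)(p) - u(p)(dv)(p)\big),
\]
which is the asserted Gauss-Green formula.

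There is no substantive obstacle here; the result is purely formal once the preceding theorem is in hand. The only point requiring care is to confirm that the hypotheses of that theorem are genuinely met in both orderings --- namely that $V_{M}$ may be taken as the common boundary for $u$ and for $v$, so that the Neumann derivatives $(du)(p)$ and $(dv)(p)$ are both defined on all of $V_{M}$. This is guaranteed by the inclusion $\mathcal{D}_{M} \subset \mathcal{D} \cap dom\,\mathcal{E}$ together with the fact that $V_{M}$ is a corresponding boundary for every function in $\mathcal{D}_{M}$.
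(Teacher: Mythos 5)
Your proposal is correct and is precisely the argument the paper intends: the corollary is obtained by applying equation \eqref{Gauss-Green1} to the ordered pairs $(u,v)$ and $(v,u)$ and cancelling via the symmetry $\mathcal{E}(u,v)=\mathcal{E}(v,u)$. Your added check that $V_{M}$ serves as a common boundary for both functions, so that both Neumann derivatives are defined on $V_{M}$, is the only hypothesis verification needed and is handled correctly.
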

%

\section{Neumann Boundary Value Problem}
\label{boundary problem}
We begin this section by recalling some of the concepts of Green's function and Green's operator on $\Sigma_{N}^{+}$, developed in \cite{arxiv}. These tools will help us in proving the existence of the solution to the equation $\Delta u = f$ under Neumann boundary conditions. Consider the operator  $G_{m}\, : \,  \ell (V_{m} \setminus V_{m - 1})  \longrightarrow  \ell (V_{m} \setminus V_{m - 1})$ defined by, 
\begin{eqnarray}
\label{G_m} 
(G_{m})_{pq}& = & 
\begin{cases}
\frac{2}{N} &\text{ if } q=p,\\
\frac{1}{N} &\text{ if } q \sim_{m} p,\\
0            &\text{ otherwise, } 
\end{cases}  
\end{eqnarray}
where $(G_{m})_{pq}$ denotes the value $(G_{m} \chi_{q}) (p)$. The \emph{Green's function} on $\Sigma_{N}^{+}$, $ \ g : \Sigma_{N}^{+} \times \Sigma_{N}^{+} \longrightarrow \mathbb{R} \cup \{ \infty \}$ is defined as,
\begin{eqnarray*}
g(x,y)\ \  = \ \ \begin{cases}
\sum\limits_{m\,=\,1}^{\rho(x,y)-1} \ \sum\limits_{r,s \,\in \,V_{m}  \setminus V_{m-1}} (G_{m})_{rs}\, \chi_{r}^{m}(x)\, \chi_{s}^{m}(y) &  \text{ if } \ \rho(x,y) > 1, \\
0 &  \text{ if } \ \rho(x,y) = 1,
\end{cases}
\end{eqnarray*}
where $ \rho(x,y)$ is the first instance where $x$ and $y$ disagree. Let $\mathcal{L}^{1}(\Sigma_{N}^{+})$ be the space of $\mu$-integrable functions on $\Sigma_{N}^{+}$. The \emph{Green's operator} on $\mathcal{L}^{1}(\Sigma_{N}^{+}) $ is an integral operator whose kernel is the Green's function. It is defined as,
\begin{equation*}
G_{\mu}f(x)\ \ := \ \ \int_{\Sigma_N^+ \setminus \{x \} } g(x,y)\, f(y) \,\mathrm{d} \mu(y) \quad \text{ for } \ f \in \mathcal{L}^{1}(\Sigma_{N}^{+}) . 
\end{equation*}  
\begin{lemma} \cite{arxiv}
\label{prop_g.o.}
For any $f \in \mathcal{C}(\Sigma_{N}^{+})$, the following holds.
\begin{enumerate}
\item $(G_{\mu} f)|_{V_{0}} \ = \ 0.$
\item \label{H_mG_mu}
For any $n \ge 1$ and $p \in V_{n}\setminus V_{n-1}$,
\begin{equation*} 
H_{n} (G_{\mu} f)(p) \ = \ - \int\limits_{\Sigma_{N}^{+}} \chi_{p}^{n}\, f \,\mathrm{d} \mu. 
\end{equation*}
\item \label{prop3} $G_{\mu}f \in D_{\mu}$ and $\Delta\, (G_{\mu}f) = -f $.
\end{enumerate}
\end{lemma}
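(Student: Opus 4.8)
The plan is to prove the three parts in the order $(1)$, $(2)$, $(3)$, with the second being the heart of the matter and the other two following quickly. For part $(1)$, I would observe that if $x = (\dot{l}) \in V_{0}$ then $x_{i} = l$ for every $i$, so $x$ can never lie in a cylinder $[r_{1}\cdots r_{m+1}]$ whose generating point satisfies $r \in V_{m}\setminus V_{m-1}$: membership would force $r_{m} = x_{m} = l = x_{m+1} = r_{m+1}$, contradicting $r_{m} \neq r_{m+1}$. Hence $\chi_{r}^{m}(x) = 0$ for all such $r$ and all $m \ge 1$, every summand in the definition of $g(x,y)$ vanishes, and so $g(x,\cdot) \equiv 0$ and $G_{\mu}f(x) = 0$.

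For part $(2)$, which is the crux, I would first note that the truncation $m \le \rho(x,y) - 1$ in the definition of $g$ is automatic: whenever $(G_{m})_{rs} \neq 0$ and $\chi_{r}^{m}(x)\chi_{s}^{m}(y) \neq 0$, the points $x,y$ are forced to agree on their first $m$ coordinates, so $m < \rho(x,y)$. This lets me write $g(x,y) = \sum_{m \ge 1}\sum_{r,s \in V_{m}\setminus V_{m-1}} (G_{m})_{rs}\,\chi_{r}^{m}(x)\,\chi_{s}^{m}(y)$ as an unrestricted sum. Since at a point $p \in V_{m}\setminus V_{m-1}$ the operator is the finite combination $H_{m}w(p) = -(N-1)w(p) + \sum_{q \in \mathcal{U}_{p,m}} w(q)$, and since single points are $\mu$-null, I can pull $H_{n}$ through the integral defining $G_{\mu}f$. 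The whole statement then reduces to the pointwise identity $H_{n}[g(\cdot,y)](p) = -\chi_{p}^{n}(y)$ for $\mu$-almost every $y$, where $H_{n}$ acts on the first variable; integrating this against $f$ yields $H_{n}(G_{\mu}f)(p) = -\int \chi_{p}^{n}\, f \, \mathrm{d}\mu$.

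To establish this pointwise identity I would restrict attention to the $n$-equivalence class $C$ of $p$, the $N$ points of $V_{n}$ sharing the prefix $p_{1}\cdots p_{n}$, since $H_{n}[g(\cdot,y)](p)$ samples $g(\cdot,y)$ only there. Analyzing $\chi_{r}^{m}(x)$ for $x \in C$ shows that $g(x,y)$ splits into a part coming from scales $m < n$ that is independent of the last coordinate of $x$, plus a single scale-$n$ term $\sum_{s}(G_{n})_{xs}\chi_{s}^{n}(y)$ present only when $x \in V_{n}\setminus V_{n-1}$. Because the scale-$(<n)$ part takes the same value at $p$ and at each of its $N-1$ neighbours, the combination $-(N-1)+(N-1)\cdot 1$ annihilates it, leaving $H_{n}[g(\cdot,y)](p) = -(N-1)\sum_{s}(G_{n})_{ps}\chi_{s}^{n}(y) + \sum_{q}\sum_{s}(G_{n})_{qs}\chi_{s}^{n}(y)$, where the $q$-sum runs over the $n$-neighbours of $p$ lying in $V_{n}\setminus V_{n-1}$. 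Substituting the explicit entries $2/N$, $1/N$, $0$ of $G_{n}$ and running the short case analysis according to whether the unique $s$ with $\chi_{s}^{n}(y) = 1$ equals $p$, is an $n$-neighbour of $p$ in $V_{n}\setminus V_{n-1}$, or is unrelated to $p$, the arithmetic collapses to $-1$ exactly when $y \in [p_{1}\cdots p_{n+1}]$ and to $0$ otherwise, which is precisely $-\chi_{p}^{n}(y)$.

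Part $(3)$ then follows from part $(2)$: dividing by $\mu([p_{1}\cdots p_{n+1}]) = N^{-(n+1)}$ gives $\frac{H_{n}(G_{\mu}f)(p)}{\mu([p_{1}\cdots p_{n+1}])} = -\frac{1}{\mu([p_{1}\cdots p_{n+1}])}\int_{[p_{1}\cdots p_{n+1}]} f\, \mathrm{d}\mu$, the negative average of $f$ over a cylinder of diameter at most $2^{-(n+1)}$ containing $p$. As $f$ is continuous on the compact space $\Sigma_{N}^{+}$, hence uniformly continuous, this average converges to $f(p)$ uniformly in $p \in V_{n}\setminus V_{n-1}$, so $\max_{p}\big|\frac{H_{n}(G_{\mu}f)(p)}{\mu([p_{1}\cdots p_{n+1}])} + f(p)\big| \to 0$; together with the uniform convergence of the defining series ensuring $G_{\mu}f \in \mathcal{C}(\Sigma_{N}^{+})$, this is exactly $G_{\mu}f \in D_{\mu}$ with $\Delta(G_{\mu}f) = -f$. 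I expect the main obstacle to be the bookkeeping in part $(2)$: correctly separating, among the $N-1$ $n$-neighbours of $p$, the single one lying in $V_{n-1}$ from the $N-2$ lying in $V_{n}\setminus V_{n-1}$, and tracking which scale-$n$ terms survive, so that the explicit $G_{n}$-arithmetic cleanly reproduces $-\chi_{p}^{n}(y)$.
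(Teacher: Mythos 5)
Your argument is correct, and the paper offers nothing to compare it against: Lemma \ref{prop_g.o.} is imported from \cite{arxiv} without proof. The route you take — absorbing the truncation $m\le\rho(x,y)-1$ into the support of the summands, pushing the finite operator $H_{n}$ under the integral, and reducing to the pointwise identity $H_{n}[g(\cdot,y)](p)=-\chi_{p}^{n}(y)$ via cancellation of the scale-$(<n)$ terms and the explicit $G_{n}$-arithmetic — is exactly the style of Green's-function manipulation the paper itself carries out in the proof of Theorem \ref{N.D.of G.O.}, and your case analysis (including the $N=2$ degenerate case and the diagonal sub-cylinder $[p_{1}\cdots p_{n}p_{n}]$, where both sides vanish) checks out.
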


\begin{theorem}
\label{N.D.of G.O.}
For any $f \in \mathcal{C}(\Sigma_{N}^{+})$ and $M \ge 0$,
\begin{equation*}
d \, (G_{\mu}f) (p) \ = \ \begin{cases}
0 \ & \text{if} \ p \in V_{M}\setminus V_{0}\\
-\int\limits_{[p_{1}]} f \, \mathrm{d}\mu & \text{where} \ p = (\dot{p}_{1}) \in V_{0}.
\end{cases}
\end{equation*} 
\end{theorem}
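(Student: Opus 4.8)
The plan is to compute the Neumann derivative $d(G_\mu f)(p) = \lim_{m \to \infty} -H_m(G_\mu f)(p)$ directly for each $p \in V_M$, splitting into the two cases $p \in V_M \setminus V_0$ and $p \in V_0$. The key tool is part \eqref{H_mG_mu} of Lemma \eqref{prop_g.o.}, which evaluates $H_n(G_\mu f)$ at points in $V_n \setminus V_{n-1}$. The obstacle is that this formula is stated only for $p \in V_n \setminus V_{n-1}$, i.e. at the ``newest'' points, whereas here $p$ is a fixed point of low level $\kappa_p \le M$ and we need $H_m(G_\mu f)(p)$ for $m$ large, where $p \in V_{\kappa_p} \setminus V_{\kappa_p - 1}$ but $p \notin V_m \setminus V_{m-1}$. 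So first I would use the telescoping identity from Section \eqref{laplacian}, namely
\[ H_m(G_\mu f)(p) \ = \ H_{\kappa_p}(G_\mu f)(p) \ + \ \sum_{j = \kappa_p + 1}^{m} \ \sum_{q \in \mathcal{U}_{p,\,j}} \big( (G_\mu f)(q) - (G_\mu f)(p) \big), \]
and take the limit as $m \to \infty$, which exists by the lemma preceding the definition of the Neumann derivative.

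For the case $p \in V_M \setminus V_0$, the natural idea is to use the second formulation of $\mathcal{E}_{H_m}$ together with Lemma \eqref{E(u,chi= -H_m u (p))}, but that lemma applies to $u \in \mathcal{D}$ evaluated against test functions, not directly here. Instead I would argue that since $\Delta(G_\mu f) = -f$ by part \eqref{prop3}, and since $G_\mu f \in \mathcal{D}_M$ for an appropriate boundary $V_M$ (a point that should be checked and follows from the regularity estimate on $G_\mu f$), one can apply the Gauss-Green relation \eqref{Gauss-Green1}, or more directly evaluate the defining limit. The cleanest route, however, is probably to observe that the harmonicity structure forces the interior Neumann derivatives to cancel: by the telescoping expression and part \eqref{H_mG_mu}, the terms $H_j(G_\mu f)(p)$ contributed at each level $j > \kappa_p$ involve integrals $\int \chi_q^{j} f \, d\mu$ over shrinking cylinders whose union over the neighbours $q \in \mathcal{U}_{p,j}$ of an interior point reassembles around $p$, and these sum telescopically to $0$ in the limit. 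I expect this cancellation to be the main computational content for the first case.

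For the case $p = (\dot p_1) \in V_0$, the point is genuinely at the outer boundary: it has no ``parent'' absorbing its contribution, so the telescoping series does not cancel but instead accumulates. Here I would track the neighbour sums $\sum_{q \in \mathcal{U}_{p,\,j}}\big((G_\mu f)(q) - (G_\mu f)(p)\big)$ level by level, using part \eqref{H_mG_mu} applied at each of the newly created neighbours $q$ of $p$ together with part (1) giving $(G_\mu f)|_{V_0} = 0$. As $m \to \infty$ the cylinders $\chi_q^{j}$ attached to the neighbours of $(\dot p_1)$ tile exactly the cylinder $[p_1]$ minus a null set, so the accumulated integrals telescope to $-\int_{[p_1]} f \, d\mu$; the minus sign comes from the definition $(du)(p) = \lim -H_m u(p)$ combined with $\Delta(G_\mu f) = -f$.

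The main obstacle, therefore, is the bookkeeping of which cylinder sets the successive neighbour contributions cover, and verifying that for an interior point $p \in V_M \setminus V_0$ the contributions above level $\kappa_p$ telescope to zero while for a boundary fixed point $p \in V_0$ they assemble into the integral over $[p_1]$. I would handle this by carefully identifying, for each $j > \kappa_p$, the cylinder $[q_1 \cdots q_{j+1}]$ associated to each neighbour $q \in \mathcal{U}_{p,\,j}$ and summing the identity from part \eqref{H_mG_mu}; the finiteness of the limit is already guaranteed by the preceding lemma, so only the value needs to be pinned down.
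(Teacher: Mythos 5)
Your strategy is sound and it is genuinely different from the paper's. The paper proves, by induction on $m \ge \kappa_{p}$, the identity $H_{m}(G_{\mu}f)(p) = -\int_{\Sigma_{N}^{+}} \chi_{p}^{m}\, f\, \mathrm{d}\mu$, and at each inductive step it goes back to the explicit formula for the Green's function: it computes the combination $-(N-1)\,g(p,y) + g(r^{1},y) + \cdots + g(r^{N-1},y)$ directly from the matrix entries \eqref{G_m} and shows it collapses to $\chi_{r^{1}}^{\kappa_{p}+1}(y) + \cdots + \chi_{r^{N-1}}^{\kappa_{p}+1}(y)$; integrating against $f$ turns the level-$j$ increment into $\int_{[p_{1}\cdots p_{j}]} f\,\mathrm{d}\mu - \int_{[p_{1}\cdots p_{j+1}]} f\,\mathrm{d}\mu$, and the telescoping you describe then gives $0$ for $p \in V_{M}\setminus V_{0}$ (the cylinders shrink to $\{p\}$) and $-\int_{[p_{1}]} f\,\mathrm{d}\mu$ for $p \in V_{0}$ (where $H_{0}(G_{\mu}f)(p)=0$ by part (1) of lemma \eqref{prop_g.o.}). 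Your route replaces this Green's-function computation by invoking statement \eqref{H_mG_mu} at the newly created neighbours, which, if carried out, buys independence from the explicit form of $g$ and is arguably cleaner.

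There is, however, one step you have not actually supplied, and it is the computational heart of the argument: statement \eqref{H_mG_mu} applied at a neighbour $q \in \mathcal{U}_{p,\,j}$ yields $H_{j}(G_{\mu}f)(q) = -\int \chi_{q}^{j} f\,\mathrm{d}\mu$, which is information about the difference operator at $q$, whereas your telescoping identity requires the increment $\sum_{q \in \mathcal{U}_{p,\,j}} \big( (G_{\mu}f)(q) - (G_{\mu}f)(p) \big)$, i.e.\ the function values themselves; as written you never extract the latter from the former. The gap is closable. Since $\{p\} \cup \mathcal{U}_{p,\,j}$ is a full $\sim_{j}$-equivalence class, one has $\{q\} \cup \mathcal{U}_{q,\,j} = \{p\} \cup \mathcal{U}_{p,\,j}$ for every $q \in \mathcal{U}_{p,\,j}$; summing $H_{j}(G_{\mu}f)(q) = -(N-1)(G_{\mu}f)(q) + \sum_{r \in \mathcal{U}_{q,\,j}} (G_{\mu}f)(r)$ over $q \in \mathcal{U}_{p,\,j}$ and counting coefficients (each neighbour receives $-(N-1) + (N-2) = -1$, the point $p$ receives $N-1$) gives $\sum_{q} H_{j}(G_{\mu}f)(q) = -\sum_{q}\big( (G_{\mu}f)(q) - (G_{\mu}f)(p) \big)$, so the increment equals $\sum_{q \in \mathcal{U}_{p,\,j}} \int \chi_{q}^{j} f\,\mathrm{d}\mu = \int_{[p_{1}\cdots p_{j}]} f\,\mathrm{d}\mu - \int_{[p_{1}\cdots p_{j+1}]} f\,\mathrm{d}\mu$, which is exactly the tiling you anticipated. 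With this inserted, both cases go through as you outline. A minor caveat: appealing to the lemma preceding the definition of the Neumann derivative for existence of the limit presupposes $G_{\mu}f \in \mathcal{D}$, which the paper establishes only afterwards (using the very identity above); this is harmless here, since the computation exhibits $H_{m}(G_{\mu}f)(p)$ as an explicitly convergent sequence of integrals, so existence comes for free.
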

\begin{proof}
Let $p \in V_{M}\setminus V_{0}$. Then $0 < \kappa_{p} \le M$ and $p = (p_{1}\,\cdots\,p_{\kappa_{p}}\,\dot{p}_{\kappa_{p}+1})$ with $p_{\kappa_{p}} \neq p_{\kappa_{p}+1}$. By statement \eqref{H_mG_mu} of lemma \eqref{prop_g.o.} we have,
\begin{equation}
\label{H_i_p}
H_{\kappa_{p}} (G_{\mu} f) (p)\ = \ - \int\limits_{\Sigma_{N}^{+}} \chi_{p}^{\kappa_{p}}\, f \,\mathrm{d} \mu. 
\end{equation}
Along the same lines, we claim that, for each $m > \kappa_{p}$,
\begin{equation}
\label{extensionH_mG_mu}
H_{m} (G_{\mu} f )(p) \ =  \ - \int\limits_{\Sigma_{N}^{+}} \chi_{p}^{m}\, f \,\mathrm{d} \mu .
\end{equation}
Let $m = \kappa_{p}+1$. $H_{\kappa_{p}+1}$ can be written in terms of $H_{\kappa_{p}}$ as,
\begin{equation}
\label{step1induction}
H_{\kappa_{p}+1} (G_{\mu} f )(p) \ = \ H_{\kappa_{p}} (G_{\mu} f) (p) \ + \, \left[ -\, (N - 1)\, (G_{\mu} f) (p)\ + \sum\limits_{q\, \in\, \mathcal{U}_{p,\, \kappa_{p}+1}} (G_{\mu} f)(q) \right].
\end{equation}
Let us denote the points in $\mathcal{U}_{p,\, \kappa_{p}+1}$ by, $\mathcal{U}_{p,\, \kappa_{p}+1} \, = \, \{ r^{1},\,r^{2},\,\cdots,\, r^{N-1} \} \subset V_{\kappa_{p}+1} \setminus V_{\kappa_{p}}$. Then,
\begin{eqnarray}
\label{eq1}
-\, (N - 1)\, (G_{\mu} f) (p)\ & + & \sum\limits_{r\, \in\, \mathcal{U}_{p,\, \kappa_{p}+1}} (G_{\mu} f)(r) \nonumber \\
 =  \ \int\limits_{\Sigma_{N}^{+} \setminus \{ p,\,r^{1}, \,\cdots,\, r^{N-1}  \}} & &\left[ - (N-1)\,g(p,y)\,+\, g(r^{1},y)\,+\,\cdots +\, g(r^{N-1},y)\right]\,f(y)\,\mathrm{d}\mu(y).
\end{eqnarray}
Consider,
\begin{align*}
-\,(N-1)\,g(p,y)\,&+\, g(r^{1},y)\,+\,\cdots \,+\, g(r^{N-1},y)  \\
= -\,&(N-1) \left[ \sum\limits_{m\,=\,1}^{\kappa_{p}} \ \sum\limits_{s,t\,\in \,V_{m} \setminus V_{m-1}} (G_{m})_{st}\, \chi_{s}^{m}(p)\, \chi_{t}^{m}(y)  \right]\   \\ 
&+\sum\limits_{m\,=\,1}^{\kappa_{p}+1}\ \sum\limits_{s,t \,\in\, V_{m} \setminus V_{m-1}} (G_{m})_{st}\, \chi_{s}^{m}(r^{1})\, \chi_{t}^{m}(y)  \\
&+ \cdots \\
&+\ \sum\limits_{m\,=\,1}^{\kappa_{p}+1} \ \sum\limits_{s,t \,\in\, V_{m} \setminus V_{m-1}} (G_{m})_{st}\, \chi_{s}^{m}(r^{N-1})\, \chi_{t}^{m}(y). 
\end{align*}
We know that $p,\,r^{1},\,r^{2},\,\cdots,\, r^{N-1} $ agree on the first $\kappa_{p}+1$ positions. So for all $m \le \kappa_{p}$, all the terms in the above sum are equal and get canceled. Only the terms corresponding to $m = \kappa_{p}+1$ remain. That is,
\begin{align*}
-\,(N-1)\,g(p,y)\,&+\, g(r^{1},y)\,+\,\cdots \,+\, g(r^{N-1},y)  \\
= & \sum\limits_{ s,t \,\in\, V_{\kappa_{p}+1} \setminus V_{\kappa_{p}} } (G_{\kappa_{p}+1})_{st}\, \chi_{s}^{\kappa_{p}+1}(r^{1})\, \chi_{t}^{\kappa_{p}+1}(y)   \\
& + \cdots \\
& + \sum\limits_{s,t \,\in \,V_{\kappa_{p}+1} \setminus V_{\kappa_{p}}} (G_{\kappa_{p}+1})_{st}\, \chi_{s}^{\kappa_{p}+1}(r^{N-1})\, \chi_{t}^{\kappa_{p}+1}(y) \\
= & \sum\limits_{ t\,\in\, V_{\kappa_{p}+1} \setminus V_{\kappa_{p}} } (G_{\kappa_{p}+1})_{r^{1}t}\, \chi_{t}^{\kappa_{p}+1}(y) \ + \ \cdots + \ \sum\limits_{t \,\in \,V_{\kappa_{p}+1} \setminus V_{\kappa_{p}}} (G_{\kappa_{p}+1})_{r^{N-1}t}\, \chi_{t}^{\kappa_{p}+1}(y). \\
\end{align*}
The second step here follows due to the fact that for each $1 \le i \le N-1$, $r^{i} \in V_{\kappa_{p}+1} \setminus V_{\kappa_{p}}$ and $\chi_{s}^{\kappa_{p}+1}(r^{i}) = 1$ if and only if $s = r^{i}$. Substituting for the values $(G_{\kappa_{p}+1})_{q^{i}s}$ as given in equation \eqref{G_m}, we get,
\[ -\,(N-1)\,g(p,y)\,+\, g(r^{1},y)\,+\,\cdots \,+\, g(r^{N-1},y) \ = \ \chi_{r^{1}}^{\kappa_{p}+1}(y) \,+\,\cdots\,+\, \chi_{r^{N-1}}^{\kappa_{p}+1}(y).\]
Define the set $P_{\kappa_{p}+1} := [ p_{1}\,\cdots\,p_{\kappa_{p}}\,p_{\kappa_{p}+1}] \setminus \left( \, [p_{1}\,\cdots\,p_{\kappa_{p}}\,p_{\kappa_{p}+1}\,p_{\kappa_{p}+1}] \cup \mathcal{U}_{p,\, \kappa_{p}+1} \,\right) $. Using equation \eqref{H_i_p} and above calculations in equation \eqref{step1induction} we obtain,
\begin{eqnarray*}
H_{\kappa_{p}+1} (G_{\mu} f )(p) \ & = & \ - \int\limits_{\Sigma_{N}^{+}} \chi_{p}^{\kappa_{p}}\, f \,\mathrm{d} \mu \ + \ \int\limits_{\Sigma_{N}^{+} \setminus \{ p,\,r^{1}, \,\cdots,\, r^{N-1} \} } \left( \chi_{r^{1}}^{\kappa_{p}+1}(y) \,+\,\cdots\,+\, \chi_{r^{N-1}}^{\kappa_{p}+1}(y) \right) \, f(y) \, \mathrm{d} \mu \\
& = & - \int\limits_{[ p_{1}\,\cdots\,p_{\kappa_{p}}\,p_{\kappa_{p}+1}]} f \,\mathrm{d} \mu \ + \  \int\limits_{P_{\kappa_{p}}}  f \,\mathrm{d} \mu \\
& = & - \int\limits_{[ p_{1}\,\cdots\,p_{\kappa_{p}}\,p_{\kappa_{p}+1}\,p_{\kappa_{p}+1}]} f \,\mathrm{d} \mu \\
& = &  - \int\limits_{\Sigma_{N}^{+}} \chi_{p}^{\kappa_{p}+1}\, f \,\mathrm{d} \mu.
\end{eqnarray*}
As $ \{r^{1},\,r^{2},\,\cdots,\, r^{N-1} \}$ is only a finite set, it has measure $0$. Similarly by the method of induction we can prove that for each $m > \kappa_{p}$,
\[ H_{m} (G_{\mu} f )(p) \ =  \ - \int\limits_{\Sigma_{N}^{+}} \chi_{p}^{m}\, f \,\mathrm{d} \mu \  = \ \int\limits_{[ p_{1}\,\cdots\,p_{m}\,p_{m+1} ]} f \,\mathrm{d} \mu.  \]
As $m \to \infty$, the cylinder sets $[p_{1}\,\cdots \, p_{m} \, p_{m+1}] \to \{p\}$. Thus, $H_{m} (G_{\mu} f )(p) \to 0$ and thus $d(G_{\mu} f) (p) = 0$.
\medskip

\noindent
Now, let $p = (p_{1}\, p_{2}\,\cdots ) = (\dot{p}_{1})\in V_{0}$, that is for all $i \ge 1, \ p_{i} = p_{1}$. Clearly $H_{0}(G_{\mu} f) = 0$, as $(G_{\mu} f)|_{V_{0}} = 0$. By the inductive definition of $H_{m}$ we have,
\[ H_{m} (G_{\mu} f) (p) \ = \ \sum\limits_{n = 1}^{m} \sum\limits_{q \, \in \, \mathcal{U}_{p,\,n}} (G_{\mu} f) (q). \]
Following similar arguments employed in the first part of the proof we get,
\begin{eqnarray*}
 H_{1} (G_{\mu} f) (p) \ & = & \int\limits_{\Sigma_{N}^{+} \setminus \{ r^{1}, \,\cdots,\, r^{N-1} \} } \left( \chi_{r^{1}}^{1}(y) \,+\,\cdots\,+\, \chi_{r^{N-1}}^{1}(y) \right) \, f(y) \, \mathrm{d} \mu \\
 & = & \int\limits_{[ p_{1} ] \setminus \mathcal{U}_{p,\,1}} f \,\mathrm{d} \mu -  \int\limits_{[ p_{1}\,p_{2} ] } f \,\mathrm{d} \mu,
\end{eqnarray*}
where $  \mathcal{U}_{p,\,1} \, = \, \{ r^{1},\,r^{2},\,\cdots,\, r^{N-1} \} \subset V_{1}\setminus V_{0}$. Again by induction it follows that,
\begin{equation*}
H_{m} (G_{\mu} f) (p) \ = \int\limits_{[ p_{1} ] \setminus \bigcup\limits_{i=1}^{m}\mathcal{U}_{p,\,i}} f \,\mathrm{d} \mu -  \int\limits_{[ p_{1}\,\cdots \,p_{m+1} ] } f \,\mathrm{d} \mu, 
\end{equation*}
For each $m \ge 1$, the set $\bigcup\limits_{i=1}^{m}\mathcal{U}_{p,\,i}$ is countable and hence of measure $0$. Also as $m \to \infty$,  $[ p_{1}\,\cdots \,p_{m+1} ] \to \{p\}$. Therefore, $d(G_{\mu} f) (p) = - \int\limits_{[p_{1}]} f \, \mathrm{d}\mu  $.

\end{proof}

\medskip

\noindent
We conclude the section by stating the Neumann boundary value problem for the Laplacian $\Delta$ and provide a sufficient condition for the existence of its solution. 
\begin{theorem}
Let $f \in \mathcal{C}(\Sigma_{N}^{+})$ and $\xi \in \ell(V_{0}) $. If $\xi(p) = \int\limits_{[p_{1}]} f \, \mathrm{d}\mu $, then there exists $u \in \mathcal{D}_{0}$ satisfying,
\[ \Delta u \,= \, f \ \ \ \text{subject to,} \ \ \ (du)(p) \, = \, \xi (p)\ \text{for} \ p \in  V_{0}. \]
%
\end{theorem}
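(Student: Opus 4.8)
The plan is to construct the solution explicitly using the Green's operator $G_{\mu}$ and then correct its boundary behaviour by adding an appropriate harmonic function. First I would set $w := -\,G_{\mu} f$. By statement \eqref{prop3} of lemma \eqref{prop_g.o.}, we have $w \in D_{\mu}$ with $\Delta w = f$, so $w$ already solves the equation $\Delta u = f$. The point of the Green's operator is that it provides a particular solution whose Neumann derivative we can compute precisely, and that is exactly the content of theorem \eqref{N.D.of G.O.}. Indeed, applying that theorem (with the sign flip coming from $w = -G_{\mu}f$), I would read off that for $p = (\dot{p}_1) \in V_0$,
\[
(dw)(p) \ = \ -\,d(G_{\mu}f)(p) \ = \ \int\limits_{[p_1]} f \,\mathrm{d}\mu \ = \ \xi(p),
\]
using the hypothesis $\xi(p) = \int_{[p_1]} f\,\mathrm{d}\mu$. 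Thus $w$ simultaneously satisfies $\Delta w = f$ and the prescribed Neumann condition on $V_0$; the only remaining issue is membership in $\mathcal{D}_0$ rather than merely $D_{\mu}$.

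The key step is therefore to verify that $w \in \mathcal{D}_0$, i.e.\ that the renormalized convergence holds uniformly over all of $V_m \setminus V_0$ and not just over $V_m \setminus V_{m-1}$. For this I would examine the proof of theorem \eqref{N.D.of G.O.} more closely: it shows that for every $p \in V_M \setminus V_0$ (any $M$), $H_m(G_{\mu}f)(p) = -\int \chi_p^m f \,\mathrm{d}\mu = \int_{[p_1\cdots p_{m+1}]} f\,\mathrm{d}\mu$ for all $m > \kappa_p$. Consequently $N^{m+1} H_m w(p) = -N^{m+1}\int_{[p_1\cdots p_{m+1}]} f\,\mathrm{d}\mu$, which is exactly the average of $-f$ over the cylinder $[p_1\cdots p_{m+1}]$ of $\mu$-measure $N^{-(m+1)}$. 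By the Lebesgue differentiation theorem (as used in the proof that strong $\Leftarrow$ weak), this average converges to $-f(p) = (\Delta w)(p)$, and since $f$ is continuous hence uniformly continuous on the compact space $\Sigma_N^+$, the convergence is uniform over $p \in V_m \setminus V_0$. This is precisely the strong formulation condition in \eqref{new domain of laplacian} with $M = 0$, giving $w \in \mathcal{D}_0$.

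Having assembled these pieces, I would conclude by taking $u := w = -G_{\mu} f$. Then $u \in \mathcal{D}_0$, $\Delta u = f$, and $(du)(p) = \xi(p)$ for all $p \in V_0$, which is the claim. The main obstacle I anticipate is the uniformity of the Lebesgue differentiation convergence across the growing sets $V_m \setminus V_0$: pointwise convergence of cylinder averages is immediate, but upgrading it to a uniform bound requires leaning on the uniform continuity of $f$ together with the fact that every point of $V_m \setminus V_0$ sits at the base of a cylinder whose diameter shrinks like $2^{-(m+1)}$. This is the same mechanism already invoked in the weak-implies-strong direction of the first theorem, so I would cite \cite{heinonen} and that argument rather than reprove it. A minor bookkeeping point worth stating explicitly is the sign convention, since $\Delta(G_{\mu}f) = -f$ forces the choice $u = -G_{\mu}f$, and this same sign is what turns $d(G_{\mu}f)(p) = -\int_{[p_1]} f\,\mathrm{d}\mu$ into the desired $(du)(p) = +\int_{[p_1]} f\,\mathrm{d}\mu = \xi(p)$.
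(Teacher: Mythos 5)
Your proposal is correct and follows essentially the same route as the paper: take $u = -G_{\mu}f$, use lemma \eqref{prop_g.o.} for $\Delta u = f$, theorem \eqref{N.D.of G.O.} for the Neumann condition on $V_{0}$, and the uniform continuity of $f$ applied to the cylinder-average identity $N^{m+1}H_{m}(G_{\mu}f)(p) = -N^{m+1}\int_{[p_{1}\cdots p_{m+1}]} f\,\mathrm{d}\mu$ to get membership in $\mathcal{D}_{0}$ (the paper phrases this via $\epsilon_{m} = \sup_{y\in[p_{1}\cdots p_{m+1}]}|f(p)-f(y)|\to 0$ rather than citing Lebesgue differentiation, but it is the same mechanism). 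The only blemish is a sign slip in your middle paragraph: since $H_{m}(G_{\mu}f)(p) = -\int_{[p_{1}\cdots p_{m+1}]}f\,\mathrm{d}\mu$ and $w=-G_{\mu}f$, the quantity $N^{m+1}H_{m}w(p)$ is the average of $f$ (not $-f$) over the cylinder and converges to $f(p)=(\Delta w)(p)$, consistent with $\Delta w = f$; this does not affect the validity of the argument.
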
 
\begin{proof}
Consider $u = -G_{\mu} f$. Then by property \eqref{prop3} of lemma \eqref{prop_g.o.}, $u$ satisfies the differential equation $\Delta u = f$. This $u$ also satisfies the boundary conditions by virtue of theorem \eqref{N.D.of G.O.}. Further, in order to prove $G_{\mu} f \in \mathcal{D}_{0}$, let $m > 0$ and $p \in V_{m} \setminus V_{0}$. Using equation \eqref{extensionH_mG_mu} we get,
\begin{eqnarray*}
\left| N^{m+1}\, H_{m} G_{\mu}f(p) \,+\, f(p)  \right| & = & \left| -\int\limits_{\Sigma_{N}^{+}}N^{m+1} \, \chi_{p}^{m}(y) \,(f(p)\,-\,f(y))\,\mathrm{d} \mu(y)    \right| \\
& \le & \int\limits_{[p_{1}\,p_{2}\,\cdots\, p_{m+1}]}N^{m+1}\, \left| f(p)-f(y) \right| \,\mathrm{d} \mu(y)\\
& \le & \epsilon_{m} \ \ \to \ \  0, \ \ \text{as} \ \ m \to \infty,
\end{eqnarray*}
where $\epsilon_{m} := \sup\limits_{y \, \in \, [p_{1}\,p_{2}\,\cdots\, p_{m+1}]} \left| f(p)-f(y) \right| $. The convergence $\epsilon_{m} \to 0$ follows from uniform continuity of $f$.
\end{proof}

\bibliographystyle{plainnat}

\bigskip 
\bigskip
\bigskip

\noindent 
\textsc{Shrihari Sridharan} \\ 
Indian Institute of Science Education and Research Thiruvananthapuram (IISER-TVM), \\ 
Maruthamala P.O., Vithura, Thiruvananthapuram, INDIA. PIN 695 551. \\
{\tt shrihari@iisertvm.ac.in}  
\bigskip \\ 

\noindent 
\textsc{Sharvari Neetin Tikekar} \\ 
Indian Institute of Science Education and Research Thiruvananthapuram (IISER-TVM), \\ 
Maruthamala P.O., Vithura, Thiruvananthapuram, INDIA. PIN 695 551. \\
{\tt sharvai.tikekar14@iisertvm.ac.in}

\end{document}